\newtheorem{thm}{Theorem}
\newtheorem{lemma}{Lemma} 
\newtheorem{prop}{Proposition} 
\newtheorem{corollary}{Corollary} 
\theoremstyle{definition} 
\newtheorem{remark}{Remark} 
\newtheorem{definition}{Definition} 
\newtheorem{example}{Example}
\newcommand{\U}{\mathcal{U}}
\newcommand{\K}{\mathcal{K}}
\newcommand{\card}{\operatorname{card}}
\newcommand{\dH}{d^{\mathrm{H}}}
\newcommand\R{\mathbb{R}} 
\newcommand\N{\mathbb{N}}
\newcommand{\be}{\begin{equation}}
\newcommand{\ee}{\end{equation}}
\newcommand{\M}{{\mathcal{M}}}
\renewcommand{\P}{{\mathcal{P}}}
\newcommand{\NN}{{\mathrm{N}}}
\newcommand{\KK}{\mathcal K}
\newcommand{\vp}{\varphi}
\newcommand{\B}{{\mathcal{B}}}
\newcommand{\A}{{\mathcal{A}}}
\newcommand{\ep}{{\varepsilon}}
\newcommand{\spa}{\operatorname{span}}
\newcommand{\sepp}{\operatorname{sep}}
\newcommand{\orbn}{{\operatorname{Orb}_n}}
\newcommand\orb[1]{{\operatorname{Orb}_{#1}}}
\newcommand{\KT}{\operatorname{KT}}
\newcommand{\AK}{\operatorname{AKEK}}
\newcommand{\hKT}{h_{\KT}}
\newcommand{\NZ}{\operatorname{NZ}}
\newcommand{\CM}{\operatorname{CMM}}
\newcommand{\hCM}{h_{\CM}}
\begin{document}

\begin{abstract}
The main aim of this note is to point out by means of counter-examples that some arguments of the proofs of two theorems about a \emph{half variational principle} for multivalued maps, formulated recently by Vivas and Sirvent [Metric entropy for set-valued maps, Discrete Contin.
Dyn. Syst. Ser. B, 27 (2022), pp. 6589–6604], are false and that our corrected versions require rather restrictive additional assumptions. Nevertheless, we will be able to establish the full variational principle for a special subclass of multivalued lower semicontinuous maps with convex compact values on a compact subset of a Banach space.
\end{abstract}

\keywords{Metric entropy, topological entropy, half variational principle, full variational principle, multivalued maps, single-valued selections}

\thanks{The authors were supported by the Grant IGA\_PrF\_2024\_006 ``Mathematical Models'' of the Internal Grant Agency of Palack\'y University in Olomouc.}

\subjclass[2020]{Primary 28D20, 37B40, 54C60; Secondary 54C65, 54C70}

\title[Note on the metric entropy for multivalued maps]{Note on the metric entropy for multivalued maps}

\author[J. Andres]{Jan Andres}
\address{Department of Mathematical Analysis and Applications of Mathematics, Faculty of Science, Palack\'y University, 17. listopadu 12, 771 46 Olomouc, Czech Republic}
\author[P. Ludv\'\i k]{Pavel Ludv\'\i k}
%\fnref{fn1}}
\email{pavel.ludvik@upol.cz}

\maketitle

\section{Introduction}
The story of entropy for single-valued maps has been described in detail and seems to be well understood (see e.g. \cite{Ba1,Ba, Ba2,Do, FN, Ka}, and the references therein). For multivalued maps, the situation is however more delicate. Although for topological entropy some papers already exist (see e.g. \cite{AK,An,AL1,AL2,AL3,CMM1,CP,EK,KT,RT,WZZ}), the results for metric entropy are still quite rare (see e.g. \cite{CMM2,LLZ,VS}).

The relationship between metric and topological entropies for single-valued continuous maps on a compact metric space can be effectively expressed by means of a \emph{variational principle}, which was formulated for the first time in 1970 by Dinaburg \cite{Di} on a finite-dimensional space and, independently, in 1971 by Goodman \cite{Go3} on an arbitrary compact metric space (cf. also \cite{Mi}). More concretely, they compared the topological entropy $h(f)$ of $f:X\to X$ with the measure-theoretic entropy $h_{\mu}(f)$ of $f$, for $f$-invariant regular probability measures $\mu$ on the Borel sets of $X$ and proved that the supremum of the metric entropies over all Borel probability invariant measures is equal to the topological entropy. Let us note that already in 1969 Goodwyn \cite{Go1} has shown that topological entropy bounds measure-theoretic (i.e. metric) entropy from above and in 1972 he extended in \cite{Go2} his result to compact Hausdorff spaces.

In a multivalued case, as far as we know, there is only an analogous inequality to the one of Goodwyn in \cite[Theorem 3.2]{VS}, where the topological entropy is understood in the sense of \cite{KT}. The reverse inequality is examined under certain special restrictions in \cite[Theorem 3.7]{VS} and in \cite[Theorem 1.4]{CMM2}, where the topological entropy is considered this time in the sense of \cite{CMM1}. Although both definitions of topological entropy for multivalued maps are consistent with the one for the single-valued case, their properties significantly differ each to other.

Unfortunately, the proofs of the mentioned results in \cite[Theorems 3.2 and 3.7]{VS} are not correct. It stimulated our reaction in this note, which is organized as follows.

At first, we will recall the related definitions of metric and topological entropy for multivalued maps and the ``half variational principles'', as it was presented in \cite{VS}. Since the standard Adler-Konheim-McAndrew type definitions (cf. \cite{AKM}) and the equivalent Bowen-Dinaburg type definitions (cf. \cite{Bo,Di}) for single-valued continuous maps on compact metric spaces can be regarded as particular cases of their multivalued extensions, we will omit them here. Then we will show  the counter-examples demonstrating the non-validity of the arguments applied to principles in \cite{VS}. Finally, we will present with comments their particular corrected versions and especially formulated the full variational principle for special lower semicontinuous maps with convex, compact values in a compact subset of a Banach space.

\section{Preliminaries}\label{sec:2}
Le $(X,d)$ be a compact metric space and $(X,\B,\mu)$ be a measure space, where $\B$ is the Borel $\sigma$-algebra of $X$ generated by open subsets of $X$ and $\mu$ stands for a $\sigma$-additive measure defined in $\B$. If $\mu(X)=1$, then it is called a \emph{probability measure}.

By a \emph{multivalued map} $\vp:X\multimap Y$, we mean the one with nonempty values, i.e. $\vp:X\to 2^{Y}\setminus\{\emptyset\}$. If $\vp$ has still closed values, then we write $\vp:X\to\K(Y)$, where $\K(Y):=\{K\subset Y: \mbox{ $K$ is compact}\}$.

The regularity of semicontinuous multivalued maps can be defined by means of the preimages of $\vp\colon X\multimap Y$, where
\begin{equation*}
\vp^{-1}_-(B):= \{x\in X:\vp(x)\subset B\} \mbox{ (``small'' preimage of $\vp$ at $B\subset Y$),}
\end{equation*}
resp.
\begin{equation*}
\vp^{-1}_+(B):= \{x\in X:\vp(x)\cap B\neq\emptyset\} \mbox{ (``large'' preimage of $\vp$ at $B\subset Y$).}
\end{equation*}

\begin{definition}\label{d:1}
\begin{enumerate}
	\item[(i)] $\vp:X\multimap Y$ is said to be \emph{upper semicontinuous} (u.s.c.) if $\vp^{-1}_-(B)$ is open for every open $B\subset Y$, resp. $\vp^{-1}_+(B)$ is closed for every closed $B\subset Y$;
	\item[(ii)] $\vp:X\multimap Y$ is said to be \emph{lower semicontinuous} (l.s.c.) if $\vp^{-1}_+(B)$ is open for every open $B\subset Y$, resp. $\vp^{-1}_-(B)$ is closed for every closed $B\subset Y$;
	\item[(iii)] $\vp:X\multimap Y$ is said to be \emph{continuous} if it is both u.s.c. and l.s.c.
\end{enumerate}
\end{definition}

Obviously, if $\vp:X\to Y$ is single-valued u.s.c. or l.s.c., then it is continuous. Moreover, if $\vp:X\to\KK(Y)$ is u.s.c. and $K\in\KK(X)$, then $\bigcup_{x\in K} \vp(x)\in\KK(Y)$ (see e.g. \cite[Proposition I.3.20]{AG}, \cite[Corollary 1.2.20]{HP})

The celebrated \emph{Michael selection theorem} \cite{Ma} (cf. also \cite[Theorem 2.1]{ATDZ} and \cite[Theorem 1.4.6]{HP}) will be formulated here in the form of proposition.

\begin{prop}[cf. \cite{Ma}]\label{p:1}
Let $X$ be a compact metric space, $Y$ be a Banach space and $\vp:X\multimap Y$ be an l.s.c. multivalued map with closed convex values. Then there exists a single-valued continuous selection $f\subset\vp$ of $\vp$, i.e. $f(x)\in\vp(x)$, for every $x\in X$.
\end{prop}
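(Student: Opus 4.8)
The plan is to reproduce the classical argument of Michael, obtaining $f$ as the uniform limit of continuous ``approximate selections''. The crux is the following one-step lemma: \emph{if $\psi\colon X\multimap Y$ is l.s.c.\ with nonempty convex (not necessarily closed) values and $\ep>0$, then there is a continuous $g\colon X\to Y$ with $\dist(g(x),\psi(x))<\ep$ for all $x\in X$.} To see this, for each $y\in Y$ put $U_y:=\psi^{-1}_+\bigl(\{z\in Y:\|z-y\|<\ep\}\bigr)$; this set is open by lower semicontinuity, and $\{U_y\}_{y\in Y}$ covers $X$ because every $\psi(x)$ is nonempty (take $y\in\psi(x)$). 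By compactness of $X$ extract a finite subcover $U_{y_1},\dots,U_{y_k}$ and choose a continuous partition of unity $p_1,\dots,p_k$ subordinate to it. Set $g(x):=\sum_{i=1}^{k}p_i(x)\,y_i$, which is continuous. For a fixed $x$, whenever $p_i(x)>0$ we have $x\in U_{y_i}$, so there is $z_i\in\psi(x)$ with $\|z_i-y_i\|<\ep$; then $\sum_{i}p_i(x)z_i\in\psi(x)$ by convexity, and $\bigl\|g(x)-\sum_i p_i(x)z_i\bigr\|\le\sum_i p_i(x)\|y_i-z_i\|<\ep$, which proves the lemma.

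Next I would build the sequence. Fix $\ep_n:=2^{-n}$. Using the lemma for $\vp$ itself, take a continuous $f_0\colon X\to Y$ with $\dist(f_0(x),\vp(x))<\ep_0$. Assuming a continuous $f_n$ with $\dist(f_n(x),\vp(x))<\ep_n$ has been found, consider
\[
\psi_n(x):=\vp(x)\cap\{z\in Y:\|z-f_n(x)\|<\ep_n\}.
\]
Its values are convex and, by the estimate on $f_n$, nonempty; moreover $\psi_n$ is again l.s.c.: the ball-valued map $x\mapsto\{z:\|z-f_n(x)\|<\ep_n\}$ is l.s.c.\ with open convex values since $f_n$ is continuous, and the intersection of an l.s.c.\ map with such an open-ball-valued l.s.c.\ map is l.s.c.\ wherever it is nonempty (a short $\ep$--$\delta$ verification using continuity of $f_n$ and lower semicontinuity of $\vp$). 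Applying the lemma to $\psi_n$ with accuracy $\ep_{n+1}$ yields a continuous $f_{n+1}$ with $\dist(f_{n+1}(x),\psi_n(x))<\ep_{n+1}$; hence $\dist(f_{n+1}(x),\vp(x))<\ep_{n+1}$, and since $f_{n+1}(x)$ lies within $\ep_{n+1}$ of a point of $\psi_n(x)\subset\{z:\|z-f_n(x)\|<\ep_n\}$, also $\|f_{n+1}(x)-f_n(x)\|<\ep_n+\ep_{n+1}$ for every $x$.

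Finally, $\sum_n\|f_{n+1}-f_n\|_\infty\le\sum_n(\ep_n+\ep_{n+1})<\infty$, so $(f_n)$ is uniformly Cauchy in $C(X,Y)$ (a Banach space, $Y$ being complete and $X$ compact) and converges uniformly to a continuous $f\colon X\to Y$. For each $x$ we get $\dist(f(x),\vp(x))=\lim_n\dist(f_n(x),\vp(x))=0$, and closedness of $\vp(x)$ forces $f(x)\in\vp(x)$, i.e.\ $f\subset\vp$. The main obstacle I anticipate is the inductive bookkeeping: one must keep $\psi_n$ lower semicontinuous \emph{and} with nonempty values at every stage. Nonemptiness is precisely what the running estimate $\dist(f_n(x),\vp(x))<\ep_n$ delivers, and lower semicontinuity follows from continuity of $f_n$ together with lower semicontinuity of $\vp$. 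Everything else is a routine partition-of-unity construction plus a telescoping/summability estimate. Note that convexity of the values is used only in the one-step lemma, and completeness of $Y$ only in the last limiting step.
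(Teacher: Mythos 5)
Your argument is correct: it is precisely the classical successive-approximation proof of the Michael selection theorem, and all the delicate points (the open-graph intersection keeping $\psi_n$ l.s.c., nonemptiness via the running estimate, convexity only in the approximate-selection lemma, completeness and closed values only in the limit) are handled properly. The paper itself gives no proof of Proposition~\ref{p:1} — it is imported verbatim from \cite{Ma} (cf. also \cite{ATDZ,HP}) — so your write-up simply supplies the standard argument behind the citation.
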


For more details concerning multivalued analysis, see e.g. \cite{AF,HP}.

\begin{definition}\label{d:2}
Let $(X,\B,\mu)$ be a measure space. Then the Borel probability measure $\mu$ is \emph{invariant} under $\vp:X\to\K(X)$ (i.e. $\vp$-invariant) if the inequality
\[
\mu(\vp^{-1}_+(A)) \geq \mu(A)
\]
holds for every $A\in\B$.
\end{definition}

It is well known that, under the above assumptions, an invariant probability measure $\mu$ always exists (for u.s.c. maps $\vp$ on compact sets $X$, see e.g. \cite[Theorem 8.9.4]{AF}, and for l.s.c. maps $\vp$ with convex values on a compact subset $X$ of a Banach space, see \cite[Theorem 2.7]{TWY}). Let us denote by $\M_1(\vp)$ the set of all $\vp$-invariant measures.

By a (measurable) partition $\P\subset\B$ of $X$, we mean a countable family of pairwise disjoint measurable sets whose union has a full measure. The \emph{entropy of a measurable partition} $\P$ is given by
\[
H_\mu(\P):= -\sum_{P\in\P}\mu(P)\log \mu(P) = -\sum_{P\in\P} \Phi(\mu(P)),
\]
where $\Phi:[0,\infty)\to\R$ is defined as
\[
\Phi(x) =
\begin{cases}
&x\cdot\log x,\mbox{ for $x\in(0,\infty)$,}\\
&0,\mbox{ for $x=0$.}
\end{cases}
\]
For the elementary properties of $\Phi$, see e.g. \cite[Theorem 4.2]{Wa}.

Furthermore, the \emph{metric entropy $h_\mu(\vp,\P)$ of $\vp$} with respect to $\mu$ of a totally ordered finite partition $\P=\{P_1,\ldots,P_n\}$ of $X$ is given by
\[
h_\mu(\vp,\P) := \limsup_{n\to\infty} \frac1{n} H_\mu(\P'_n),
\]
where the elements of $\P'_n$ take the form $P_0\cap P_1\cap\ldots\cap P_{n-1}$, $P_k\in\tilde{\P}_k$, for $k=0,\ldots,n-1$, and $\tilde{P}_k$ is the refinement of $\P_k=\vp^{-k}_+(\P)$, $k\geq 1$, whose elements are defined as
\[
\tilde{P}_{1,k} = \vp^{-k}_+(P_1), \tilde{P}_{j,k} = \vp^{-k}_+(P_j)\setminus\bigcup_{i<j} \vp^{-k}_+(P_i), j=2,\ldots,n.
\]
Thus, we consider the following sequence of partitions of $X$:
\[
\tilde{\P}_0=\P, \P'_k = \tilde{\P}_0\vee\tilde{\P}_1\vee\ldots\vee\tilde{P}_{k-1} = \bigvee_{i=0}^{k-1} \tilde{P}_i, k\geq 1,
\]
where $\bigvee_{i=0}^{k-1} \tilde{P}_i = \{\bigcap_{i=0}^{k-1}\tilde{P}_{j,i}:\tilde{P}_{j,i}\in\tilde{P}_i, i=0,\ldots,k-1,\mbox{ and } \bigcap_{i=0}^{k-1}\tilde{P}_{j,i}\neq\emptyset, j=1,\ldots,n\}$, $k\geq 1$.

\begin{definition}[cf. {\cite[Definition 2.1]{VS}}]\label{d:3}
The \emph{metric entropy} $h_\mu(\vp)$ of $\vp:X\to\K(X)$ is given by
\[
h_\mu(\vp) := \sup_{\P} h_\mu(\vp,\P) = \sup_{\P} \limsup_{n\to\infty} \frac1{n} H_\mu(\bigvee_{k=0}^{n-1} \tilde{P}_k),
\]
where the supremum is taken over all finite totally ordered partitions $\P$ of $X$.
\end{definition}

\begin{remark}
In a single-valued case, Definition~\ref{d:3} coincides with the standard definitions of metric entropy considered in \cite{Di,Go1,Go3,Mi,Wa}.
\end{remark}

The following useful lemma is a variant of the well known result (see e.g. \cite[Corollary 4.2.1]{Wa}), suited for our purposes.

\begin{lemma}\label{l:1}
Let $(X,\mu)$ be a measurable space and $\P=\{A_1,\ldots,A_k\}$ be a measurable partition of $X$. Then
\[
H_{\mu}(\P) \leq \log \NZ(\P),
\]
where $\NZ(\P)$ is the number of sets in $\P$ with a non-zero measure.
\end{lemma}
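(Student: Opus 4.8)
The plan is to reduce the estimate to the cells of $\P$ carrying positive measure and then to exploit the convexity of the function $\Phi$. Write $\P=\{A_1,\dots,A_k\}$, put $m:=\NZ(\P)$, and let $A_{i_1},\dots,A_{i_m}$ be those members of $\P$ with $\mu(A_{i_j})>0$. Since $\Phi(0)=0$, the cells of zero measure contribute nothing to the entropy, so that
\[
H_\mu(\P)=-\sum_{j=1}^{m}\Phi\bigl(\mu(A_{i_j})\bigr).
\]
Because $\P$ is a partition, its union has full measure, whence $\sum_{j=1}^{m}\mu(A_{i_j})=\mu(X)=1$ (the normalization $\mu(X)=1$ being part of the standing hypotheses, all measures occurring here being Borel probability measures).

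Next I would apply Jensen's inequality. The function $\Phi$ is convex on $[0,\infty)$ (see e.g. \cite[Theorem 4.2]{Wa}), hence
\[
\frac1m\sum_{j=1}^{m}\Phi\bigl(\mu(A_{i_j})\bigr)\ \ge\ \Phi\!\left(\frac1m\sum_{j=1}^{m}\mu(A_{i_j})\right)=\Phi\!\left(\frac1m\right)=\frac1m\log\frac1m=-\frac{\log m}{m}.
\]
Multiplying through by $m$ and substituting into the displayed formula for $H_\mu(\P)$ yields
\[
H_\mu(\P)=-\sum_{j=1}^{m}\Phi\bigl(\mu(A_{i_j})\bigr)\ \le\ \log m=\log\NZ(\P),
\]
which is exactly the claimed bound.

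There is essentially no serious obstacle here: the statement is a routine variant of the classical fact that a partition into $m$ sets of positive measure has entropy at most $\log m$ (cf. \cite[Corollary 4.2.1]{Wa}), the only points deserving a word of care being that zero-measure cells may be discarded because $\Phi(0)=0$, and that one should keep track of the normalization $\mu(X)=1$ so that the barycenter fed into Jensen's inequality is precisely $1/m$. An alternative, equally short route would be to invoke Gibbs' inequality, i.e. the non-negativity of the Kullback--Leibler divergence, comparing the restriction of $\mu$ against the uniform distribution on the $m$ cells of positive measure; but the Jensen argument above is the most self-contained and fits directly the notation $H_\mu(\P)=-\sum_{P\in\P}\Phi(\mu(P))$ already fixed in Section~\ref{sec:2}.
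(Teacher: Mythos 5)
Your proof is correct and follows essentially the same route as the paper's: discard the zero-measure cells (legitimate since $\Phi(0)=0$), apply Jensen's inequality to the convex function $\Phi$ over the $m=\NZ(\P)$ cells of positive measure, and use $\sum_j\mu(A_{i_j})=1$ to evaluate $-m\,\Phi(1/m)=\log m$. The only cosmetic difference is that the paper disposes of the degenerate case $\NZ(\P)=0$ explicitly before applying convexity.
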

\begin{proof}
From the definition of the entropy $H_{\mu}(\P)$ of the measurable partition $\P$, we have
\[
H_{\mu}(\P) = -\sum_{i=1}^{k} \Phi(\mu(A_i)).
\]

If $\NZ(\P)=0$, then there is nothing to prove. Assume $\NZ(\P)>0$ and
let us define the set of indices $i\in\{1,\ldots,k\}$ with $\mu(A_i)\neq 0$ as $I$. Then $\card I = \NZ(\P)$. Since $\Phi(0)=0$ and $\Phi$ is strictly convex on $[0,\infty)$, we can write
\begin{align*}
H_{\mu}(\P) &= -\sum_{i\in I} \Phi(\mu(A_i)) = 
-\NZ(\P) \sum_{i\in I} \frac1{\NZ(\P)}\Phi(\mu(A_i)) \\
&\leq -\NZ(\P) \Phi\left(\frac1{\NZ(\P)} \sum_{i\in I} \mu(A_i)\right) = -\NZ(\P) \Phi(\NZ(\P)^{-1}) \\
&= -\NZ(\P)\cdot\NZ(\P)^{-1}\cdot \log(\NZ(\P)^{-1}) = \log \NZ(\P). 
\end{align*}
\end{proof}

It will be convenient to recall several definitions of topological entropy for multivalued maps given in \cite{An,AL1,AL2,AL3,CMM1,KT}. We start with the Adler-Konheim-McAndrew type (cf. \cite{AKM}) definitions in \cite{An,AL1}.

Hence, consider the l.s.c. multivalued map $\vp:X\multimap X$ on a compact metric space $X$. For open covers $\A_j$, $j=0,\ldots,n-1$, of $X$, we define again
\[
\bigvee_{j=0}^{n-1} \A_j := \{ \bigcap_{j=0}^{n-1} A_j: A_j\in\A_j, j=0,\ldots,n-1,\mbox{ and } \bigcap_{j=0}^{n-1}A_j\neq\emptyset\}.
\]

Since for open covers $\A$ of $X$ we have (cf. \cite{An,AL1})
\[
\vp^{-j}_+(\A) = \{\vp^{-j}_+(A):A\in\A\}, j\geq0,
\]
we can put
\[
		\A^n_{\vp_+} := \bigvee_{j=0}^{n-1} \vp_{+}^{-j}(\A) = \bigvee_{j=0}^{n-1} \{\vp_{+}^{-j}(A): A\in\A\}.
\]

Taking the growth rate of the number of elements in $\A^n_{\vp_+}$ by (cf. \cite[Lemma 3]{AL1})
\[
h_+(\vp,\A):= \lim_{n\to\infty} \frac1{n} \log(\NN(\A^n_{\vp_+})),
\]
where $\NN(\A^n_{\vp_+})$ is the minimal cardinality of a subcover $\B\subset\A^n_{\vp_+}$, the topological entropy $h_+(\vp)$ of $\vp$ can be defined in the following way.

\begin{definition}[cf. {\cite[Definition 6]{AL1}}]\label{d:4}
Let $\vp:X\multimap X$ be an l.s.c. multivalued map on a compact metric space $X$. The \emph{topological entropy $h_+(\vp)$ of $\vp$} is given by
\[
h_+(\vp):= \sup \{h_+(\vp,\A): \mbox{ $\A$ is an open cover of $X$}\}.
\]
\end{definition}

\begin{remark}
Let us note that, unlike to \cite[Definition 6]{AL1}, we have defined here the topological entropy $h_+$ for l.s.c. maps $\vp:X\multimap X$ with not necessarily closed values, which will be convenient for the following lemma and its application in the proof of Theorem~\ref{t:2} below. The closed values of l.s.c. maps need not be preserved for their compositions and, in particular their iterates (see e.g. \cite{ATDZ,HP}).
\end{remark}

\begin{lemma}\label{l:iter}
Let $X$ be a compact metric space and $\vp:X\multimap X$ be an l.s.c. map. Then
\begin{align}\label{eq:iter}
h_{+}(\vp^k) \leq k\cdot h_{+}(\vp),
\end{align}
for any $k\in\N$.
\end{lemma}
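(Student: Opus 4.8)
The plan is to fix an arbitrary open cover $\A$ of $X$ and, for each $n\in\N$, to compare the cover $\A^n_{(\vp^k)_+}$ built from the $k$-th iterate with the cover $\A^{kn}_{\vp_+}$ built from $\vp$ itself. The starting point is the elementary identity $(\vp^k)^{-j}_+(A) = \vp^{-kj}_+(A)$ for every $A\subset X$ and every $j\geq 0$, which follows at once from $(\vp^k)^j = \vp^{kj}$ together with the defining formula $\psi^{-m}_+(A) = \{x : \psi^m(x)\cap A\neq\emptyset\}$. Consequently
\[
\A^n_{(\vp^k)_+} = \bigvee_{j=0}^{n-1}\vp^{-kj}_+(\A),
\]
so that $\A^n_{(\vp^k)_+}$ is the join over the index set $\{0,k,2k,\dots,(n-1)k\}$, a subset of $\{0,1,\dots,kn-1\}$. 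Before invoking Definition~\ref{d:4} for $\vp^k$ one should also note that $\vp^k$ is again an l.s.c. multivalued self-map of $X$ — composition of l.s.c. maps is l.s.c. via $(\psi\circ\vp)^{-1}_+(B) = \vp^{-1}_+(\psi^{-1}_+(B))$ — whose values need not be closed; this is exactly why the preceding remark extended $h_+$ to that generality, so $h_+(\vp^k)$ is well defined.

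Next I would record that $\A^{kn}_{\vp_+}$ refines $\A^n_{(\vp^k)_+}$. Indeed, a typical element of $\A^{kn}_{\vp_+}$ is $\bigcap_{i=0}^{kn-1}\vp^{-i}_+(B_i)$ with $B_i\in\A$, and it is contained in $\bigcap_{j=0}^{n-1}\vp^{-kj}_+(B_{kj})$, which is an element of $\A^n_{(\vp^k)_+}$ (we merely intersect a subfamily). Hence any subcover of $\A^{kn}_{\vp_+}$ yields, upon replacing each member by a superset in $\A^n_{(\vp^k)_+}$, a subcover of the latter of no larger cardinality, so that
\[
\NN\bigl(\A^n_{(\vp^k)_+}\bigr)\leq\NN\bigl(\A^{kn}_{\vp_+}\bigr).
\]

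Finally, dividing by $n$ and letting $n\to\infty$,
\[
h_+(\vp^k,\A) = \lim_{n\to\infty}\frac1n\log\NN\bigl(\A^n_{(\vp^k)_+}\bigr)
\leq \lim_{n\to\infty} k\cdot\frac1{kn}\log\NN\bigl(\A^{kn}_{\vp_+}\bigr) = k\cdot h_+(\vp,\A),
\]
where the last equality uses that the sequence $\bigl(\tfrac1m\log\NN(\A^m_{\vp_+})\bigr)_{m}$ converges to $h_+(\vp,\A)$ (cf. \cite[Lemma 3]{AL1}), hence so does its subsequence along $m=kn$. Taking the supremum over all open covers $\A$ yields $h_+(\vp^k)\leq k\cdot h_+(\vp)$, which is \eqref{eq:iter}. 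The argument is essentially routine; the only points demanding care are the preimage bookkeeping $(\vp^k)^{-j}_+ = \vp^{-kj}_+$ and the existence of the relevant limit (so that passing to the subsequence $m=kn$ is legitimate). In particular the reverse inequality $h_+(\vp^k)\geq k\cdot h_+(\vp)$, which may fail for genuinely multivalued maps because of the asymmetry of the "large'' preimage under intersections, is not claimed and not needed here.
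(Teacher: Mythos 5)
Your proof is correct and follows essentially the same route as the paper's: both establish that $\A^{kn}_{\vp_+}$ refines $\A^n_{(\vp^k)_+}$ (via the identity $(\vp^k)^{-j}_+=\vp^{-kj}_+$), deduce $\NN(\A^n_{(\vp^k)_+})\leq\NN(\A^{kn}_{\vp_+})$, and pass to the limit along the subsequence $m=kn$ using the existence of the limit from \cite[Lemma 3]{AL1} before taking the supremum over open covers. Your version merely makes explicit a few points the paper leaves implicit (the preimage bookkeeping, the l.s.c.\ of $\vp^k$ without closed values, and the legitimacy of the subsequence step).
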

\begin{proof}
Let $k\in\N$ and $\A$ be an open cover of $X$. Then
\[
\A\vee (\vp^k)^{-1}_{+}(\A) \vee \ldots \vee (\vp^{k})^{-n+1}_{+}(\A) \prec \A\vee \vp^{-1}_{+}(\A) \vee \ldots \vee \vp^{-nk+1}_{+}(\A),
\]
where the symbol ``$\prec$'' denotes that the right-hand side is finer than the left-hand side.

Therefore (see also \cite[Lemma 3]{AL2}),
\begin{align*}
h_{+}(\vp) \geq h_{+}(\vp,\A) &= \lim_{n\to\infty} \frac1{nk} \log \NN(\A\vee \vp^{-1}_{+}(\A) \vee \ldots \vee \vp^{-nk+1}_{+}(\A))\\
&\geq \frac1{k} \lim_{n\to\infty} \frac1{n} \log \NN(\A\vee (\vp^k)^{-1}_{+}(\A) \vee \ldots \vee (\vp^{k})^{-n+1}_{+}(\A))\\
&= \frac1{k} h_{+}(\vp^k,\A).
\end{align*}

Passing to the supremum over all open covers $\A$, we obtain \eqref{eq:iter}.

\end{proof}

\begin{lemma}[cf. {\cite[Proposition 7]{AL1}}]\label{l:2}
Let $\vp:X\to\K(X)$ be an l.s.c. multivalued map on a compact metric space $X$ and assume that $\vp$ possesses a single-valued continuous selection $f\subset\vp$, i.e. $f(x)\in\vp(x)$, for every $x\in X$. Then the inequality 
\begin{align}\label{eq:A1}
h_+(\vp)\leq h(f):=h_+(f)
\end{align}
holds for the topological entropy $h_+$ in the sense of Definition~\ref{d:4}.
\end{lemma}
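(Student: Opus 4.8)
The plan is to reduce the assertion to a comparison of open covers, following the line of \cite[Proposition 7]{AL1}. Fix an arbitrary open cover $\A$ of $X$. The crucial elementary fact is that, since $f$ is a continuous selection of $\vp$, the $j$-th iterate $f^j$ is a selection of $\vp^j$ for every $j\geq 0$; this is immediate by induction, because $\vp^{j+1}(x)=\bigcup_{y\in\vp^j(x)}\vp(y)$ and $f(y)\in\vp(y)$ for every $y$. Recalling that $\vp^{-j}_+(A)=(\vp^j)^{-1}_+(A)=\{x\in X:\vp^j(x)\cap A\neq\emptyset\}$, we then get, for every open $A\subset X$,
\[
f^{-j}(A)=\{x\in X: f^j(x)\in A\}\subseteq\{x\in X:\vp^j(x)\cap A\neq\emptyset\}=\vp^{-j}_+(A),
\]
because $f^j(x)\in A$ forces $f^j(x)\in A\cap\vp^j(x)$.

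Next I would transfer this inclusion to the joined covers $\A^n_f:=\bigvee_{j=0}^{n-1}f^{-j}(\A)$ and $\A^n_{\vp_+}$. Every element of $\A^n_f$ has the form $B=\bigcap_{j=0}^{n-1}f^{-j}(A_j)$ with $A_j\in\A$, and the inclusion above yields $B\subseteq C:=\bigcap_{j=0}^{n-1}\vp^{-j}_+(A_j)$, where $C\in\A^n_{\vp_+}$ as soon as $C\neq\emptyset$, which holds whenever $B\neq\emptyset$. Hence, given a subcover $\B\subseteq\A^n_f$ of minimal cardinality $\NN(\A^n_f)$ (its members may be assumed nonempty), replacing each $B\in\B$ by the corresponding $C$ produces a subcover of $\A^n_{\vp_+}$ with at most $\NN(\A^n_f)$ elements; therefore
\[
\NN(\A^n_{\vp_+})\leq \NN(\A^n_f)\qquad\text{for every }n\in\N.
\]
Applying $\frac1n\log(\cdot)$ and passing to the limit gives $h_+(\vp,\A)\leq h_+(f,\A)$, and since $f$ is single-valued and continuous the quantity $h_+(f,\A)$ is exactly the Adler-Konheim-McAndrew contribution of $\A$ to the topological entropy of $f$ (as $\vp=f$ single-valued gives $\A^n_{f_+}=\A^n_f$). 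Taking the supremum over all open covers $\A$ finally yields $h_+(\vp)\leq h_+(f)=h(f)$, i.e.\ \eqref{eq:A1}.

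I do not expect a genuine obstacle here: the argument is just monotonicity of $\NN(\cdot)$ under enlarging the members of a cover, combined with the selection inclusion $f^{-j}(A)\subseteq\vp^{-j}_+(A)$. The only points that deserve a line of justification are that the iterated large preimage satisfies $(\vp^{-1}_+)^j=(\vp^j)^{-1}_+$ (so that the inclusion on preimages is legitimate in the form used in the definition of $\A^n_{\vp_+}$), and that the replacement sets $C$ are really elements of $\A^n_{\vp_+}$, i.e.\ nonempty — which is automatic because each $C$ contains a nonempty $B$ taken from a subcover.
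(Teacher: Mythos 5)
Your argument is correct and is essentially the standard one behind the cited \cite[Proposition 7]{AL1}, to which the paper defers for this lemma: the inclusion $f^{-j}(A)\subseteq\vp^{-j}_+(A)$ (valid because $f^j$ is a selection of $\vp^j$ and $(\vp^j)^{-1}_+$ agrees with the iterated large preimage) gives $\NN(\A^n_{\vp_+})\leq\NN(\A^n_f)$, and the conclusion follows by taking logarithms, limits and the supremum over open covers. No gaps; the two points you flag (the identity for iterated large preimages and the nonemptiness of the replacement sets $C$) are exactly the ones needing a line of justification, and you justify them correctly.
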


Now, we will recall the Bowen-Dinaburg type definitions from \cite{KT} and \cite{CMM1}.

For multivalued maps $\vp:X\multimap X$, we will consider the sets of \emph{n-orbits} of $\vp$ as follows:
\begin{align*}
\orb{1}(\vp)&:=X,\\
\orbn(\vp) &:= \{(x_0,\ldots, x_{n-1})\in X^{n}: x_{i+1} \in \vp(x_i), i=0,\ldots, n-2\},\, n\geq 2.
\end{align*}

\begin{definition}\label{d:5}
Let $X=(X,d)$ be a metric space and $\ep>0$. A set $S\subset X$ is called \emph{$\ep$-separated} if $d(x,y)>\ep$ holds for every pair of distinct points $x,y\in S$. A set $R\subset X$ is called \emph{$\ep$-spanning} in $Y\subset X$ if, for every $y\in Y$, there is $x\in R$ such that $d(x,y)\leq\ep$.
\end{definition}

\begin{definition}\label{d:6}
Let $\vp:X\multimap X$ be a multivalued map on a metric space $X=(X,d)$ and $\ep>0$. Denoting by $d_n$ the metric on $X^n$ defined as
\[
d_n((x_0,\ldots,x_{n-1}),(y_0,\ldots,y_{n-1})):=\max\{d(x_i,y_i):i=0,\ldots,n-1\},
\]
we call $S\subset\orbn(\vp)$ an \emph{$(\ep,n)_{\KT}$-separated set} for $\vp$ if it is an $\ep$-separated subset of the metric space $(X^n,d_n)$. We call $R\subset\orbn(\vp)$ an \emph{$(\ep,n)_{\KT}$-spanning set} for $\vp$ if it is an $\ep$-spanning subset in $\orbn(\vp)\subset X^n$.
\end{definition}

\begin{definition}[cf. \cite{KT}]\label{d:7}
Let $X=(X,d)$ be a compact metric space and $\vp:X\multimap X$ be a multivalued map. Denoting by $s_{\KT}(\vp,\ep,n)$ the largest cardinality of an $(\ep,n)$-separated set for $\vp$, we take

\begin{equation*}
\hKT^{\sepp}(\vp,\ep) := \limsup_{n\to\infty} \frac1{n}\log s_{\KT}(\vp,\ep,n).
\end{equation*}
The \emph{topological entropy} $\hKT^{\sepp}(\vp)$ of $\vp$ is defined to be
\begin{equation*}
\hKT^{\sepp}(\vp):= \sup_{\ep>0} \hKT^{\sepp}(\vp,\ep).
\end{equation*}

Denoting by $r_{\KT}(\vp,\ep,n)$ the smallest cardinality of an $(\ep,n)_{\KT}$-spanning set for $\vp$, we take
\begin{equation*}
\hKT^{\spa}(\vp,\ep) := \limsup_{n\to\infty} \frac1{n}\log r_{\KT}(\vp,\ep,n).
\end{equation*}
The \emph{topological entropy} $\hKT^{\spa}(\vp)$ of $\vp$ is defined to be
\begin{equation*}
\hKT^{\spa}(\vp):= \sup_{\ep>0} \hKT^{\spa}(\vp_{0,\infty},\ep).
\end{equation*}
\end{definition}

\begin{remark}\label{r:2}
Although the multivalued maps under consideration in \cite{KT} were defined to be u.s.c. with closed values, we have shown in \cite{AL3} that the maps can be quite arbitrary and since $\hKT^{\sepp}(\vp)=\hKT^{\spa}(\vp)$, we can put
\[
\hKT(\vp):= \hKT^{\sepp}(\vp)=\hKT^{\spa}(\vp).
\]
\end{remark}

\begin{lemma}[cf. {\cite[Theorem 4.2]{KT}} and {\cite[Remark 3.5]{AL3}}]\label{l:3}
Let $\vp:X\multimap X$ be a multivalued map on a compact metric space $X$ and assume that $\vp$ possesses a single-valued continuous selection $f\subset\vp$, i.e. $f(x)\in\vp(x)$, for every $x\in X$. Then the inequality 
\begin{align}\label{eq:A2}
h(f):= \hKT(f)\leq\hKT(\vp)
\end{align}
holds for the topological entropy $\hKT$ in the sense of Definition~\ref{d:7} (see also Remark~\ref{r:2}).
\end{lemma}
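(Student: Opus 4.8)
The plan is to mimic the classical Bowen--Dinaburg proof that a single-valued continuous self-map $f$ has equal separated and spanning entropies, and to transport the relevant separated sets from the orbit spaces of $f$ into the orbit spaces of $\vp$. The key observation is that, since $f$ is a selection of $\vp$, every genuine $f$-orbit is in particular an $n$-orbit of $\vp$: if $x_{i+1}=f(x_i)$ then $x_{i+1}=f(x_i)\in\vp(x_i)$, so $\orbn(f)\subset\orbn(\vp)$, where for the single-valued map $f$ one has the usual identification $\orbn(f)=\{(x,f(x),\ldots,f^{n-1}(x)):x\in X\}$, naturally in bijection with $X$ via the first coordinate.

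First I would fix $\ep>0$ and $n\in\N$ and take an $(\ep,n)$-separated set $S$ for $f$ of maximal cardinality $s_{\KT}(f,\ep,n)$; concretely, $S$ is a subset of $\orbn(f)\subset X^n$ that is $\ep$-separated in $(X^n,d_n)$. By the inclusion $\orbn(f)\subset\orbn(\vp)$ noted above, the very same set $S$ is an $(\ep,n)_{\KT}$-separated set for $\vp$. Hence $s_{\KT}(f,\ep,n)\le s_{\KT}(\vp,\ep,n)$ for every $\ep>0$ and every $n$. Taking $\frac1n\log(\cdot)$, passing to $\limsup_{n\to\infty}$, and then to $\sup_{\ep>0}$ yields
\[
h(f)=\hKT(f)=\hKT^{\sepp}(f)\le\hKT^{\sepp}(\vp)=\hKT(\vp),
\]
where the first equality is the classical fact (Definition~\ref{d:7} restricted to single-valued continuous maps recovers the usual topological entropy), and the last equality is Remark~\ref{r:2}. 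This already gives \eqref{eq:A2}.

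The main obstacle is not the combinatorial inclusion itself but making sure the two sides are being measured with the \emph{same} metric on $X^n$, i.e. that Definition~\ref{d:6} applied to the single-valued $f$ genuinely reproduces the classical Bowen metric $d_n^f(x,y)=\max_{0\le i\le n-1}d(f^i x,f^i y)$ under the identification $\orbn(f)\cong X$; this is where one must be slightly careful, but it is exactly the content of \cite[Theorem 4.2]{KT} together with \cite[Remark 3.5]{AL3}, which we are allowed to invoke, and it causes no real difficulty since for $f$ single-valued the orbit is uniquely determined by its initial point. One should also remark that no semicontinuity or compactness of values of $\vp$ is used beyond the standing compactness of $X$; by Remark~\ref{r:2} the entropy $\hKT$ is defined for arbitrary $\vp$, so the argument applies verbatim. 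Alternatively, one could run the dual argument with spanning sets — every $\ep$-spanning set for $\vp$ in $\orbn(\vp)$ restricts to one for $f$ in $\orbn(f)$ is \emph{false} in general (the spanning set may use orbits outside $\orbn(f)$), so the separated-set route above is the clean one, which is why I would present it that way.
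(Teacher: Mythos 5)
Your argument is correct and is exactly the standard one underlying the cited results: the paper itself offers no proof of Lemma~\ref{l:3}, merely referring to \cite[Theorem 4.2]{KT} and \cite[Remark 3.5]{AL3}, and your key observation that $f\subset\vp$ forces $\orbn(f)\subset\orbn(\vp)$, so that every $(\ep,n)_{\KT}$-separated set for $f$ is one for $\vp$ and hence $s_{\KT}(f,\ep,n)\leq s_{\KT}(\vp,\ep,n)$, is precisely how that inequality is obtained. Your side remarks — that $\hKT^{\sepp}(f)$ recovers the classical Bowen--Dinaburg entropy under the identification of $\orbn(f)$ with $X$, and that the spanning-set route is not the clean one — are also accurate.
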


\begin{lemma}\label{l:4}
The inequality
\begin{align}\label{eq:A3}
h_+(\vp) \leq \hKT(\vp)
\end{align}
holds for any multivalued l.s.c. map $\vp:X\to\K(X)$ on a compact metric space $X$.
\end{lemma}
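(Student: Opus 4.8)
The plan is to compare the two entropies cover by cover, bounding the covering quantity $\NN(\A^n_{\vp_+})$ from above by a spanning number for orbits. Since $\hKT(\vp)=\hKT^{\spa}(\vp)$ by Remark~\ref{r:2}, it will suffice to prove that for every finite open cover $\A$ of $X$ there is an $\ep>0$ such that
\[
\NN(\A^n_{\vp_+})\le r_{\KT}(\vp,\ep,n)\qquad\text{for all }n\in\N;
\]
applying $\frac1n\log(\cdot)$, letting $n\to\infty$, and then passing to the supremum over all $\A$ will give \eqref{eq:A3}.

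To produce such an $\ep$, I would fix $\A$ and let $\delta>0$ be a Lebesgue number for it, so that every subset of $X$ of diameter less than $\delta$ lies in some member of $\A$. I then choose $\ep$ with $2\ep<\delta$ and let $R\subset\orbn(\vp)$ be a minimal $(\ep,n)_{\KT}$-spanning set, so that $\card R=r_{\KT}(\vp,\ep,n)$; such a finite set exists because $\orbn(\vp)\subset X^n$ is totally bounded. For each spanning orbit $\rho=(y_0,\ldots,y_{n-1})\in R$ and each index $i$, the ball $B(y_i,\ep)$ has diameter at most $2\ep<\delta$, so I may fix $A_i^\rho\in\A$ with $B(y_i,\ep)\subset A_i^\rho$ and set
\[
C_\rho:=\bigcap_{i=0}^{n-1}\vp^{-i}_+(A_i^\rho)\in\A^n_{\vp_+}.
\]

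The decisive step will be to check that $\{C_\rho:\rho\in R\}$ already covers $X$, which forces $\NN(\A^n_{\vp_+})\le\card R$. Here lies the one genuinely multivalued difficulty: membership $x\in\vp^{-i}_+(A_i^\rho)$ only asserts that \emph{some} orbit from $x$ meets $A_i^\rho$ at time $i$, and a priori these witnessing orbits could vary with $i$, which would wreck the argument. I would remove this difficulty by committing to a single orbit first. Given an arbitrary $x\in X$, I use that $\vp$ has nonempty values to select one $n$-orbit $\omega=(x_0,\ldots,x_{n-1})$ with $x_0=x$, and then pick $\rho\in R$ that $\ep$-spans it, i.e.\ $d(x_i,y_i)\le\ep$ for all $i$. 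Then $x_i\in B(y_i,\ep)\subset A_i^\rho$, and since $x_i\in\vp^i(x)$ this \emph{same} orbit yields $\vp^i(x)\cap A_i^\rho\neq\emptyset$, that is $x\in\vp^{-i}_+(A_i^\rho)$, simultaneously for every $i$; hence $x\in C_\rho$.

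With the covering claim established, the estimate $\NN(\A^n_{\vp_+})\le r_{\KT}(\vp,\ep,n)$ follows, whence
\[
h_+(\vp,\A)=\lim_{n\to\infty}\frac1n\log\NN(\A^n_{\vp_+})\le\limsup_{n\to\infty}\frac1n\log r_{\KT}(\vp,\ep,n)=\hKT^{\spa}(\vp,\ep)\le\hKT(\vp),
\]
and taking the supremum over all finite open covers $\A$ proves \eqref{eq:A3}. I expect the main obstacle to be exactly the coordination issue above — guaranteeing that one orbit certifies all the preimage memberships at once — together with the small bookkeeping fact that the large preimage commutes with iteration, $\vp^{-i}_+=(\vp^i)^{-1}_+$, so that $x\in\vp^{-i}_+(A)$ is equivalent to the existence of an $(i{+}1)$-orbit from $x$ ending in $A$. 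The Lebesgue-number and total-boundedness inputs are routine, and lower semicontinuity with closed values is needed only so that $h_+$ is well defined, the limit in $h_+(\vp,\A)$ existing by \cite[Lemma 3]{AL1}; it plays no role in the estimate itself.
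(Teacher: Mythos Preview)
Your argument is correct. The Lebesgue-number choice, the construction of the sets $C_\rho$, and above all the ``single orbit'' trick that certifies all memberships $x\in\vp^{-i}_+(A_i^\rho)$ simultaneously are exactly what is needed; the identity $\vp^{-i}_+=(\vp^i)^{-1}_+$ that you invoke is immediate from the definition of the large preimage. One small point of bookkeeping: you take the supremum over \emph{finite} open covers, while Definition~\ref{d:4} allows arbitrary ones; this is harmless since on a compact space every open cover $\A$ admits a finite subcover $\A'\subset\A$ with $h_+(\vp,\A)\le h_+(\vp,\A')$.

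Your route, however, is genuinely different from the paper's. The paper does not prove \eqref{eq:A3} directly: it only gives a sketch that chains two earlier results, namely $h_+(\vp)\le h_{\AK}(\vp)$ from \cite[Theorem~3]{AL1} and the equality $h_{\AK}(\vp)=\hKT(\vp)$ from \cite[Theorem~3.4]{AL3}, passing through the auxiliary Adler--Konheim--McAndrew type entropy $h_{\AK}$. What your approach buys is a self-contained, elementary comparison of $\NN(\A^n_{\vp_+})$ with $r_{\KT}(\vp,\ep,n)$ that makes the mechanism transparent and avoids any detour through $h_{\AK}$; what the paper's approach buys is brevity and an explicit link among three entropy notions, at the price of relying on external references.
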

\begin{proof}[Sketch of the proof.]
The inequality 
\[
h_+(\vp) \leq h_{\AK}(\vp)
\]
was verified in \cite[Theorem 3]{AL1} for continuous multivalued maps $\vp$, where $h_{\AK}$ stands for the Adler-Konheim-McAndrew type entropy defined in \cite{AK,EK} for u.s.c. maps. The continuity was therefore assumed as an intersection of the u.s.c. requirement in \cite{AK,EK} and the l.s.c. requirement in \cite{AL1}.

Since we have shown in \cite[Theorem 3.4]{AL3} that the upper semicontinuity  of $\vp$ in $h_{\AK}(\vp)$ is superfluous and, especially, that the equality $\hKT(\vp)=h_{\AK}(\vp)$ holds for arbitrary multivalued maps, we can restrict ourselves just to l.s.c. maps.
\end{proof}

\begin{definition}
Let $(X,d)$ be a compact metric space and $\vp:X\multimap X$ be a  multivalued map. For $n\in\N$, we define the pseudometric
\begin{align*}
d_n^{\CM}(x,y) := \inf_{\bar{x}\in\orbn(\vp,x),\bar{y}\in\orbn(\vp,y)} \max_{0\leq i \leq n-1} p(\bar{x}_i,\bar{y}_i), \quad x,y\in X,
\end{align*}
where $\bar{x}=(\bar{x}_0,\ldots,\bar{x}_{n-1})$, $\bar{y}=(\bar{y}_0,\ldots,\bar{y}_{n-1})$ and $x=\bar{x}_0$, $y=\bar{y}_0$.

We call $S\subset X$ an \emph{$(\ep,n)_{\CM}$-separated set for $\vp$} if it is a $(d_n^{\CM},\ep)$-separated subset, i.e. if $d^{\CM}_n(x,y)>\ep$ holds for every pair of distinct points $x,y\in S$. 

We call $R\subset X$ an \emph{$(\ep,n)_{\CM}$-spanning set for $\vp$} if it is a $(d_n^{\CM},\ep)$-spanning subset in $X$, i.e. if for every $y\in X$ there is $x\in R$ such that $d^{\CM}_n(x,y)\leq\ep$.
\end{definition}

\begin{definition}[cf. \cite{CMM1}]\label{d:9}
Let $(X,d)$ be a compact metric space and $\vp: X\multimap X$ be a multivalued map. Denoting by $s_{\CM}(\vp,\ep,n)$ the largest cardinality of an $(\ep,n)_{\CM}$-separated set for $\vp$, we take
\begin{equation*}
\hCM^{\sepp}(\vp,\ep) := \limsup_{n\to\infty} \frac1{n}\log s_{\CM}(\vp,\ep,n).
\end{equation*}
The \emph{topological entropy $\hCM^{\sepp}(\vp)$ of $\vp$} is defined to be
\begin{equation*}
\hCM^{\sepp}(\vp):= \sup_{\ep>0} \hCM^{\sepp}(\vp,\ep).
\end{equation*}

Denoting by $r_{\CM}(\vp,\ep,n)$ the smallest cardinality of an $(\ep,n)_{\CM}$-spanning set for $\vp$, we take
\begin{equation*}
\hCM^{\spa}(\vp,\ep) := \limsup_{n\to\infty} \frac1{n}\log r_{\CM}(\vp,\ep,n).
\end{equation*}
The \emph{topological entropy $\hCM^{\spa}(\vp)$ of $\vp$} is defined to be
\begin{equation*}
\hCM^{\spa}(\vp):= \sup_{\ep>0} \hCM^{\spa}(\vp,\ep).
\end{equation*}
\end{definition}

\begin{lemma}[cf. {\cite[Theorem 3.5]{CMM1}}]\label{l:5}
The inequality
\begin{align}\label{eq:A4}
\hCM^{\spa}(\vp)\leq \hCM^{\sepp}(\vp)
\end{align}
holds for any multivalued map $\vp:X\multimap X$ on a compact metric space $X$.
\end{lemma}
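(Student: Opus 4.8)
The plan is to carry over the classical comparison between spanning and separated sets (the ``easy half'' of the Bowen--Dinaburg equivalence; cf. \cite{Wa}) to the pseudometric $d_n^{\CM}$. The one fact that does all the work is that a \emph{maximal} $(\ep,n)_{\CM}$-separated set for $\vp$ is automatically an $(\ep,n)_{\CM}$-spanning set, whence $r_{\CM}(\vp,\ep,n)\le s_{\CM}(\vp,\ep,n)$ for every $\ep>0$ and $n\in\N$. Inequality \eqref{eq:A4} then follows by applying $\tfrac1n\log(\cdot)$, passing to $\limsup_{n\to\infty}$, and taking $\sup_{\ep>0}$.

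First I would observe that, since $x=\bar x_0$ and $y=\bar y_0$ occur as the zeroth coordinates in the definition of $d_n^{\CM}$, for any orbits $\bar x\in\orbn(\vp,x)$, $\bar y\in\orbn(\vp,y)$ one has $\max_{0\le i\le n-1} d(\bar x_i,\bar y_i)\ge d(\bar x_0,\bar y_0)=d(x,y)$, so that $d_n^{\CM}(x,y)\ge d(x,y)$ for all $x,y\in X$. Consequently every $(\ep,n)_{\CM}$-separated set is $\ep$-separated in the compact metric space $(X,d)$, hence finite with cardinality bounded by a constant depending only on $\ep$; in particular $s_{\CM}(\vp,\ep,n)<\infty$, and there is a maximal $(\ep,n)_{\CM}$-separated set $S$ with $\card S=s_{\CM}(\vp,\ep,n)$.

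Next I would check that $S$ is $(\ep,n)_{\CM}$-spanning in $X$. Fix $y\in X$. If $y\in S$, then taking the \emph{same} orbit from $\orbn(\vp,y)$ for both arguments gives $d_n^{\CM}(y,y)=0\le\ep$. If $y\notin S$, then by maximality the set $S\cup\{y\}$ is not $(\ep,n)_{\CM}$-separated; the only pairs of distinct points in $S\cup\{y\}$ not already separated are those of the form $\{x,y\}$ with $x\in S$, so there is $x\in S$ with $d_n^{\CM}(x,y)\le\ep$ (using symmetry of the pseudometric $d_n^{\CM}$). In both cases $y$ lies within $\ep$, in $d_n^{\CM}$, of a point of $S$; hence $S$ is $(\ep,n)_{\CM}$-spanning and $r_{\CM}(\vp,\ep,n)\le\card S=s_{\CM}(\vp,\ep,n)$. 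Dividing by $n$, letting $n\to\infty$, and taking the supremum over $\ep>0$ yields \eqref{eq:A4}.

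I do not anticipate any real obstacle here: this is the elementary direction of the spanning/separated duality. The only subtlety worth flagging is that $d_n^{\CM}$ is merely a pseudometric, so $d_n^{\CM}(x,y)=0$ need not imply $x=y$; this is immaterial for the argument, because the existence of a maximal separated set rests only on the domination $d_n^{\CM}\ge d$ together with compactness of $(X,d)$, while the ``maximal separated $\Rightarrow$ spanning'' implication uses only that $d_n^{\CM}$ is symmetric and satisfies $d_n^{\CM}(y,y)=0$.
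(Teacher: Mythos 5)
Your proof is correct. The paper itself states Lemma~\ref{l:5} without proof, simply citing \cite[Theorem 3.5]{CMM1}; your argument --- that $d_n^{\CM}\ge d$ forces every $(\ep,n)_{\CM}$-separated set to be finite (so a maximal one exists), and that a maximal $(\ep,n)_{\CM}$-separated set is automatically $(\ep,n)_{\CM}$-spanning, giving $r_{\CM}(\vp,\ep,n)\le s_{\CM}(\vp,\ep,n)$ and hence \eqref{eq:A4} after taking $\tfrac1n\log(\cdot)$, $\limsup_{n\to\infty}$ and $\sup_{\ep>0}$ --- is exactly the standard spanning/separated duality underlying the cited result, and your handling of the fact that $d_n^{\CM}$ is only a symmetric pseudometric with $d_n^{\CM}(y,y)=0$ is the right way to dispose of the one genuine subtlety.
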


\begin{lemma}[cf. {\cite[Theorem 3.5]{CMM1}}]\label{l:6}
Let $\vp:X\multimap X$ be a multivalued map on a compact metric space $X$ and assume that $\vp$ possesses a single-valued continuous selection $f\subset\vp$, i.e. $f(x)\in\vp(x)$, for every $x\in X$. Then the inequalities
\begin{align}\label{eq:A5}
\hCM^{\sepp}(\vp)\leq h(f):=\hCM^{\sepp}(f) \mbox{ and } \hCM^{\spa}(\vp)\leq h(f):=\hCM^{\spa}(f)
\end{align}
hold for the topological entropies $\hCM^{\sepp}$ and $\hCM^{\spa}$ in the sense of Definition~\ref{d:9}.
\end{lemma}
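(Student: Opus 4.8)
The plan is to deduce both inequalities from a single elementary observation: a continuous selection of $\vp$ furnishes, at every point and for every length, a $\vp$-orbit. Writing $\orbn(\vp,x):=\{\bar x\in\orbn(\vp):\bar x_0=x\}$, the hypothesis $f\subset\vp$ gives
\[
\orbn(f,x)=\{(x,f(x),\dots,f^{n-1}(x))\}\subseteq\orbn(\vp,x),\qquad x\in X,\ n\in\N,
\]
so the infimum in the definition of the $\CM$-pseudometric for $\vp$ is taken over a larger family of pairs of orbits than the one for $f$. Denoting these pseudometrics by $d_{n,\vp}^{\CM}$ and $d_{n,f}^{\CM}$ respectively, this yields at once
\[
d_{n,\vp}^{\CM}(x,y)\le d_{n,f}^{\CM}(x,y),\qquad x,y\in X,\ n\in\N,
\]
where $d_{n,f}^{\CM}$ is just the Bowen--Dinaburg metric $\max_{0\le i\le n-1}d(f^i(x),f^i(y))$, since $\orbn(f,x)$ is a singleton.

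With this pseudometric domination in hand, the two asserted inequalities are routine. First I would observe that every $(\ep,n)_{\CM}$-separated set for $\vp$ is also $(\ep,n)_{\CM}$-separated for $f$: if $d_{n,\vp}^{\CM}(x,y)>\ep$ for distinct $x,y$, then a fortiori $d_{n,f}^{\CM}(x,y)\ge d_{n,\vp}^{\CM}(x,y)>\ep$. Hence $s_{\CM}(\vp,\ep,n)\le s_{\CM}(f,\ep,n)$ for all $\ep>0$ and $n\in\N$, and applying $\limsup_{n\to\infty}\frac1n\log(\cdot)$ and then $\sup_{\ep>0}$ gives $\hCM^{\sepp}(\vp)\le\hCM^{\sepp}(f)=h(f)$. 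Dually, every $(\ep,n)_{\CM}$-spanning set for $f$ is $(\ep,n)_{\CM}$-spanning for $\vp$ (if $d_{n,f}^{\CM}(x,y)\le\ep$ then $d_{n,\vp}^{\CM}(x,y)\le\ep$), so $r_{\CM}(\vp,\ep,n)\le r_{\CM}(f,\ep,n)$, and the same limiting procedure yields $\hCM^{\spa}(\vp)\le\hCM^{\spa}(f)=h(f)$. The final equalities use that for a single-valued continuous $f$ the $\CM$-entropies coincide with the classical spanning/separated topological entropy $h(f)$, which is exactly the identification recorded in the statement.

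I do not expect a genuine obstacle here; the whole content is concentrated in the inclusion $\orbn(f,x)\subseteq\orbn(\vp,x)$ and hence in the \emph{direction} of the pseudometric inequality. What must be emphasized --- and what, I suspect, is mishandled in the related assertions of \cite{VS} --- is that this comparison is available \emph{only} because $f$ is a selection of $\vp$: enlarging the family of admissible orbits can only decrease the infimum defining $d_n^{\CM}$, never increase it, so for an arbitrary continuous map no such estimate would follow. Compactness of $X$ enters only to make $h(f)$ finite and to legitimize the normalizations; the scheme is that of \cite[Theorem~3.5]{CMM1}.
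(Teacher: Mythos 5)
Your argument is correct: the inclusion $\orbn(f,x)\subseteq\orbn(\vp,x)$ forces $d_{n,\vp}^{\CM}\le d_{n,f}^{\CM}$, and the two inequalities for separated and spanning cardinalities follow in the directions you state. The paper gives no proof of this lemma, quoting it directly from \cite[Theorem 3.5]{CMM1}; your reconstruction is precisely the standard argument behind that citation, so there is nothing to add.
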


\section{Theorems of Vivas and Sirvent and their critical analysis}

The first theorem, called in \cite{VS} a ``half variational principle'' reads as follows.

\begin{thm}[cf. {\cite[Theorem 3.2]{VS}}]\label{t:1}
Let $\vp:X\to\K(X)$ be a multivalued map on a compact metric space $X$. Then the inequality
\begin{align}\label{eq:A6}
\hKT(\vp)\geq\sup_{\mu\in\M_1(\vp)} h_\mu(\vp)
\end{align}
holds for the topological entropies $h_\mu(\vp)$ and $\hKT(\vp)$ of $\vp$ in the sense of Definitions~\ref{d:3} and~\ref{d:7} (see Remark~\ref{r:2}), where $\M_1(\vp)$ stands for the set of $\vp$-invariant measures.
\end{thm}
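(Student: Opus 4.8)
The plan is to follow the classical Goodwyn--Misiurewicz route for the single-valued inequality $h_\mu(f)\le h(f)$, substituting the large preimages $\vp^{-k}_{+}$ for $f^{-k}$ and the $(\ep,n)_{\KT}$-separated sets of Definition~\ref{d:7} for the Bowen ones (recall $\hKT(\vp)=\hKT^{\sepp}(\vp)$, Remark~\ref{r:2}). Fix $\mu\in\M_1(\vp)$ and a finite totally ordered partition $\P=\{P_1,\dots,P_m\}$ of $X$; since $\hKT(\vp)$ is independent of $\P$, it suffices to show $h_\mu(\vp,\P)\le\hKT(\vp)$ and then pass to the supremum. First I would reduce to a ``nice'' partition: by inner regularity of $\mu$, choose compact $B_i\subset P_i$ with $\mu(P_i\setminus B_i)$ so small that, for $B_0:=X\setminus\bigcup_{i=1}^{m}B_i$ and $\mathcal B:=\{B_1,\dots,B_m,B_0\}$ totally ordered, the conditional-entropy bound $H_\mu(\P\mid\mathcal B)<\eta$ holds; re-deriving the elementary (sub)additivity of $H_\mu(\,\cdot\mid\cdot\,)$ for the min-refinement scheme $\P\mapsto\bigvee_{k=0}^{n-1}\tilde\P_k$ of Definition~\ref{d:3} --- paying attention to the fact that $\vp$-invariance gives only $\mu(\vp^{-k}_{+}A)\ge\mu(A)$ --- then yields $h_\mu(\vp,\P)\le h_\mu(\vp,\mathcal B)+\eta$.

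Next I would estimate $h_\mu(\vp,\mathcal B)$. Put $b:=\tfrac12\min_{1\le i<j\le m}\dist(B_i,B_j)>0$ and let $\gamma$ be the finite open cover of $X$ formed by the $b$-neighbourhoods of $B_1,\dots,B_m$ together with the open set $B_0$; by the choice of $b$ this cover has multiplicity at most $2$ and every atom of $\mathcal B$ lies in an element of $\gamma$. A Lebesgue-number argument tied to $(\delta,n)_{\KT}$-separated sets (with $\delta<b$), together with Lemma~\ref{l:1}, should bound $\tfrac1n H_\mu\big(\bigvee_{k=0}^{n-1}\tilde{\mathcal B}_k\big)$ by $\log 2+\tfrac1n\log s_{\KT}(\vp,\delta,n)+o(1)$, whence $h_\mu(\vp,\mathcal B)\le\log 2+\hKT(\vp)$. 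The spurious $\log 2$ is then removed by Misiurewicz's device: apply the very same estimate to the $N$-fold map $\vp^{N}$ acting on the refined partition $\bigvee_{k=0}^{N-1}\tilde{\mathcal B}_k$, invoke the scaling inequality $\hKT(\vp^{N})\le N\,\hKT(\vp)$ (proved as in Lemma~\ref{l:iter}, by interpolating each $n$-orbit of $\vp^{N}$ to an $((n-1)N+1)$-orbit of $\vp$), divide by $N$ and let $N\to\infty$; finally let $\eta\to0$.

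The step I expect to be the genuine obstacle --- and, I suspect, precisely where the proof of \cite[Theorem~3.2]{VS} breaks down --- is the passage from the metric-entropy join to $n$-orbits. In Definition~\ref{d:3} the atom of $\bigvee_{k=0}^{n-1}\tilde{\mathcal B}_k$ through a point $x$ is governed by the \emph{whole} image set $\vp^{k}(x)$ through $j_k=\min\{i:\vp^{k}(x)\cap B_i\neq\emptyset\}$, whereas an $n$-orbit in $\orbn(\vp)$ records only \emph{one} point of each $\vp^{k}(x)$; the $(k{+}1)$-orbits that witness $\vp^{k}(x)\cap B_{j_k}\neq\emptyset$ for the various $k$ need not combine into a single $n$-orbit, so a positive-measure atom may correspond to no element of $\orbn(\vp)$ at all, and the number of such atoms is not obviously dominated by $s_{\KT}(\vp,\delta,n)$. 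The same defect --- formally, the failure of $\vp^{-k}_{+}$ to distribute over intersections, $\vp^{-k}_{+}(C_1\cap C_2)\subsetneq\vp^{-k}_{+}(C_1)\cap\vp^{-k}_{+}(C_2)$ in general --- also obstructs the identity $h_\mu(\vp^{N},\bigvee_{k<N}\tilde{\mathcal B}_k)=N\,h_\mu(\vp,\mathcal B)$ used in the Misiurewicz step. I therefore expect the argument to close only under additional, rather restrictive, hypotheses (say, upper semicontinuity of $\vp$, so that each $\vp^{k}$ is again u.s.c.\ with compact values, plus a structural condition forcing the min-itinerary to be realised along a single orbit), in agreement with the abstract's warning.
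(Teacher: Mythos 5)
You should first be aware that this paper does not prove Theorem~\ref{t:1} at all: the statement is reproduced from \cite[Theorem 3.2]{VS} precisely so that its proof can be dismantled. Section 3 gives a counter-example (a shifted tent map) to the central inequality \eqref{eq:A7} used in \cite{VS}, and observes further that the covers $\U_n$ there need not be open and that the elements of $\beta_n$ need not be pairwise disjoint, so the bound $\card\beta_n\le 2^n\NN(\U_n)$ fails. In the generality in which you were asked to prove it, the inequality \eqref{eq:A6} is therefore, as far as this paper is concerned, unestablished; what the paper actually proves is Theorem~\ref{t:2} together with Corollary~\ref{c:1}, i.e. $\hKT(\vp)\ge h_+(\vp)\ge h_\mu(\vp)$ for l.s.c.\ maps under the additional measure-theoretic hypotheses \eqref{eq:A8}. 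So the fact that your argument does not close is not a defect you could have repaired.

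Measured against that corrected result, your outline is essentially the paper's: inner-regular compact approximation $B_i\subset P_i$, the conditional-entropy bound $H_\mu(\P\mid\mathcal{B})<\eta$, an auxiliary cover whose $n$-fold refinement overlaps the partition with multiplicity at most $2^n$ (the paper uses the open cover $\{D_0\cup D_1,\dots,D_0\cup D_k\}$ and Lemma~\ref{l:1} rather than a Lebesgue-number count against separated sets), and the Misiurewicz passage to $\vp^N$ via Lemma~\ref{l:iter} to remove the additive $\log 2+1$. More to the point, the two steps you single out as the genuine obstacles are exactly the two places where the paper must invoke \eqref{eq:A8}: the subadditivity $h_\mu(\vp,\P)\le h_\mu(\vp,\beta)+H_\mu(\P\mid\beta)$ (inequality \eqref{eq:con}) and the power inequality $h_\mu(\vp^N)\ge N\,h_\mu(\vp)$ both break down because $\vp_+^{-k}$ does not distribute over intersections and because invariance only gives $\mu(\vp_+^{-k}(A))\ge\mu(A)$. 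Your diagnosis is thus correct and coincides with the paper's. Two smaller discrepancies: you guess upper semicontinuity as the supplementary hypothesis, whereas the paper needs lower semicontinuity (so that the covers $\vp_+^{-i}(\U)$ are open, cf.\ Remark~\ref{r:9}) plus the equalities \eqref{eq:A8}; and your direct attack on $\hKT$ via $(\ep,n)_{\KT}$-separated sets is replaced in the paper by the detour through $h_+$ and Lemma~\ref{l:4}, which sidesteps the orbit-versus-atom mismatch you describe.
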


Our critical analysis can be expressed, besides other things, in the following two points.

\textbf{(i)} We will show by the counter-example that the central inequality at condition (10) in its proof in \cite[Theorem 3.2]{VS} is false, namely that the inequality
\begin{align}\label{eq:A7}
\log \card(\P'_n)\leq \log \card(\beta'_n)
\end{align}
does not hold, where $\P'_n$ was defined above and $\beta'_n$ can be defined quite analogously by means of $\beta=\{D_0,D_1,\ldots,D_k\}$, where $D_0=X\setminus \bigcup_{i=1}^{k} D_i$ and $D_i\subset P_i$, for $i=1,\ldots,k$, are compact sets such that $D_i\cap D_j = \emptyset$, for $i\neq j$. The proof presented in \cite{VS} tries to emulate the standard proof of variational principle for single-valued maps (see e.g. \cite[Theorem 8.6]{Wa}), where the compact sets $D_i$ are chosen to be close to $A_i$ (with respect to measure $\mu$), for $i=1,\ldots,k$. The problem is that the compact sets $D_i$ in \cite{VS} are just arbitrary subsets of $A_i$, $i=1,\ldots,k$, without the required closedness.

\begin{example}
Consider the (continuous) tent map $f:[0,1]\to[0,1]$, where
\begin{align*}
f(x):=
\begin{cases}
&x + \frac14,\mbox{ for $x\in[0,\frac12]$,}\\
&\frac54-x, \mbox{ for $x\in(\frac12,1]$.}
\end{cases}
\end{align*}
Taking
\begin{align*}
\P &= \tilde{\P}_0 = \{P_1,P_2\} = \{[0,\frac12],(\frac12,1]\},\\
\beta &= \tilde{\beta}_0 = \{D_0,D_1,D_2\} = \{(0,1),\{0\},\{1\}\},
\end{align*}
(observe that $D_1\subset P_1$, $D_2\subset P_2$ and $D_1\cap D_2 = \emptyset$), we get
\begin{align*}
\tilde{\P}_1 &= \{[0,\frac14]\cup[\frac34,1],(\frac14,\frac34)\},\\
\P'_2  &= \tilde{\P}_0\vee\tilde{\P}_1 = \{(0,\frac14],(\frac14,\frac12],(\frac12,\frac34),[\frac34,1]\},
\end{align*}
where $\card(\P'_2) = 4$.

On the other hand,
\begin{align*}
\tilde{\beta}_1 &= \{[0,1]\},\\
\beta'_2 &= \tilde{\beta}_0\vee\tilde{\beta}_1 = \tilde{\beta}_0 = \{(0,1),\{0\},\{1\}\},
\end{align*}
where $\card(\beta_2')=3$.

Hence, $\card(\P'_2) > \card(\beta'_2)$, which disproves \eqref{eq:A7}, as claimed.
\end{example}

In order to avoid the wrong condition \eqref{eq:A7}, we will proceed in an alternative way under an additional restriction imposed on a multivalued l.s.c. map, see \eqref{eq:A8}. Let us note that the same restrictive conditions have been used in another context in \cite[Proposition 1]{VS}.

\medskip

\textbf{(ii) } Another gap in the proof of \cite[Theorem 3.2]{VS} can be detected in the following argument preceding deduction of \cite[inequality (11)]{VS}: \emph{Let $\NN(\U_n)$ be the minimal cardinality of a subcover of $\U_n$. Since each element of $\U_n$ is the union of at most $2^n$ elements of $\beta_n$ it follows that $\card\beta_n\leq 2^n \NN(\U_n)$, so that $\log \card\beta_n \leq n\log 2 + \log \NN(\U_n)$.}

Since the mapping under consideration is a general multivalued map with closed values, $\U_n$ are not necessarily open covers (see also Remark~\ref{r:9} below), and $\NN(\U_n)$ may be an infinite cardinal. Even if we assume its finiteness, one cannot agree with the quoted reasoning, because the elements of $\beta_n$ need not be pairwise disjoint (in contrast to the single-valued case). Therefore, the cardinality of $\beta_n$ cannot be bounded from above by $2^n\NN(\U_n)$.

In the sequel, we will avoid the usage of the wrong argument by the application of Lemma~\ref{l:1}. 

\bigskip

Although the proof of the following theorem employs the arguments of an elegant proof of single valued variational principle due to Misiuriewicz (see \cite{Mi}), it must have been furnished with some new ideas.

\begin{thm}\label{t:2}
Let $\vp:X\multimap X$ be an l.s.c. multivalued map on a compact metric space $X$ and let $\mu\in\M_1(\vp)$ satisfy
\begin{align}\label{eq:A8}
\mu(\vp_{+}^{-1}(A))=\mu(A) \mbox{ and } \mu(\vp_{+}^{-1}(A)\cap\vp_{+}^{-1}(B)) = \mu(\vp_{+}^{-1}(A\cap B)),
\end{align}
for all $A,B\in\B$. Then
\begin{align}\label{eq:hvp}
h_{\mu}(\vp) \leq h_+(\vp)
\end{align}
holds for the topological entropies $h_\mu(\vp)$ and $h_+(\vp)$ of $\vp$ in the sense of Definitions~\ref{d:3} and~\ref{d:4}.
\end{thm}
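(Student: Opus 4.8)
The plan is to follow the Misiurewicz-style proof of the variational principle, but route everything through the topological entropy $h_+$ (Definition~\ref{d:4}) rather than a Bowen--Dinaburg quantity, and crucially to exploit the additivity/multiplicativity hypotheses \eqref{eq:A8} to control the combinatorics of the refinements $\P'_n$. First I would fix a finite totally ordered partition $\P=\{P_1,\dots,P_k\}$ of $X$ and, given $\ep>0$, choose a compact refinement in the usual measure-theoretic sense: compact sets $D_i\subset P_i$ with $\mu(P_i\setminus D_i)$ small, producing the partition $\beta=\{D_0,D_1,\dots,D_k\}$ with $D_0=X\setminus\bigcup_{i\ge1}D_i$. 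The standard estimate $H_\mu(\P'_n)\le H_\mu(\beta'_n)+H_\mu(\P'_n\mid\beta'_n)$ together with the concavity bound on the conditional entropy reduces matters to estimating $\tfrac1n H_\mu(\beta'_n)$; this is where one must be careful, since (as the authors note in point (ii)) the elements of $\beta_n$ need not be disjoint. Here I would invoke Lemma~\ref{l:1}: $H_\mu(\beta'_n)\le\log\NZ(\beta'_n)\le\log\card(\beta'_n)$, so it suffices to bound $\card(\beta'_n)$ by something like $\NN(\A^n_{\vp_+})$ for a suitable open cover $\A$ of $X$ whose mesh is adapted to the compact sets $D_i$.

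The key point where \eqref{eq:A8} enters is the following: because $\mu(\vp_+^{-1}(A)\cap\vp_+^{-1}(B))=\mu(\vp_+^{-1}(A\cap B))$ and $\mu(\vp_+^{-1}(A))=\mu(A)$, the construction of the refined partitions $\tilde\P_k$ — which in general can blow up in cardinality in an uncontrolled way because $\vp_+^{-1}$ does not commute with finite intersections for arbitrary multivalued maps — behaves, on the level of $\mu$-measure, exactly as it would for a single-valued measure-preserving transformation. Concretely, under \eqref{eq:A8} the refinement operation defining $\tilde P_{j,k}$ out of $\vp_+^{-k}(P_j)$ is, up to $\mu$-null sets, the pullback of $\P$ along an iterate, so $H_\mu(\beta'_n)$ matches the quantity one would obtain by pulling back a fixed finite open cover by $\vp$ and counting — i.e.\ $\tfrac1n H_\mu(\beta'_n)\le\tfrac1n\log\NN(\A^n_{\vp_+})+o(1)$, which tends to $h_+(\vp,\A)\le h_+(\vp)$. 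Choosing $\A$ to be a finite open cover refining $\{X\setminus D_i^c\}$-type sets around the $D_i$ so that each element of $\A^n_{\vp_+}$ meets controllably few elements of $\beta'_n$ gives the needed comparison $\card(\beta'_n)\le\NN(\A^n_{\vp_+})\cdot(\text{subexponential factor})$; taking the limsup in $n$ and then letting $\ep\to0$ (which kills the conditional-entropy term and the subexponential corrections) and finally taking the supremum over all partitions $\P$ yields \eqref{eq:hvp}.

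To organize the write-up I would proceed in this order: (1) reduce to a fixed partition $\P$ and record the entropy subadditivity and conditional-entropy inequalities; (2) carry out the compact approximation $D_i\subset P_i$ and fix a finite open cover $\A$ whose elements separate the $D_i$; (3) apply Lemma~\ref{l:1} to pass from $H_\mu(\beta'_n)$ to $\log\card(\beta'_n)$; (4) use \eqref{eq:A8} to show that, modulo $\mu$-null sets, the refined partitions collapse to honest pullbacks so that $\card(\beta'_n)$ is controlled by $\NN(\A^n_{\vp_+})$; (5) invoke Lemma~\ref{l:iter} only if an iterate trick is needed to absorb the subexponential error, then pass to the limit and to the supremum over $\P$ and $\ep$. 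I expect the main obstacle to be step (4): making precise, and correct, the claim that hypothesis \eqref{eq:A8} forces the (generally pathological) multivalued refinement scheme to agree up to null sets with a single-valued-style pullback, and simultaneously matching this with a genuine open cover so that the bound feeds into $h_+(\vp)$ rather than into some weaker or stronger entropy. A secondary subtlety is that $h_+$ is defined via $\NN$ (minimal subcover cardinality) rather than total cardinality, so one must be sure the passage $\card(\beta'_n)\rightsquigarrow\NN(\A^n_{\vp_+})$ respects this — likely handled by choosing $\A$ so that distinct nonempty elements of $\beta'_n$ force distinct necessary members of any subcover.
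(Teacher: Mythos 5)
Your overall route is the paper's: a Misiurewicz-style compact approximation $D_i\subset P_i$, the conditional-entropy bound, Lemma~\ref{l:1} to circumvent the non-disjointness of the sets $\beta_n$, and a comparison with an open cover that feeds into $h_+(\vp)$. The genuine gap is in how you propose to dispose of the error terms. With the cover the paper actually uses, $\U=\{D_0\cup D_1,\ldots,D_0\cup D_k\}$ (each member is open, being the complement of the compact set $\bigcup_{j\neq i,\,j\geq 1}D_j$), every element of $\bigvee_{i=0}^{n-1}\vp_+^{-i}(\U)$ decomposes, since $\vp_+^{-i}$ distributes over unions, into a union of at most $2^n$ elements of $\bigvee_{i=0}^{n-1}\vp_+^{-i}(\beta)$; this yields $\NZ(\bigvee_{i=0}^{n-1}\tilde{\beta}_i)\leq 2^n\,\NN(\bigvee_{i=0}^{n-1}\vp_+^{-i}(\U))$ and hence only
\[
h_\mu(\vp,\P)\leq h_+(\vp)+\log 2+1 .
\]
The factor $2^n$ is exponential, not subexponential, and the resulting additive $\log 2$ (together with the $+1$ coming from $H_\mu(\P\mid\beta)$) is \emph{not} killed by letting $\ep\to0$ or by taking the limsup in $n$: it survives as a constant in the entropy rate. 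The only way the paper removes it is the power trick that you relegate to an optional step (5): apply the intermediate bound to $\vp^m$, use $m\,h_\mu(\vp)\leq h_\mu(\vp^m)$ (from \cite[Theorem 3.1]{VS}) and $h_+(\vp^m)\leq m\,h_+(\vp)$ (Lemma~\ref{l:iter}), divide by $m$ and let $m\to\infty$. This step is essential, not optional, and it carries obligations you do not address: one must check that $\vp^m$ is again l.s.c., that the hypotheses \eqref{eq:A8} are inherited by $\vp^m$ (the paper does this by induction), and that $\mu\in\M_1(\vp^m)$.

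A secondary weakness is step (4) as you describe it. Your candidate cover ``refining $\{X\setminus D_i^c\}$-type sets'' is not well defined ($X\setminus D_i^c=D_i$ is compact, not open), and the comparison $\card(\beta'_n)\leq\NN(\A^n_{\vp_+})\cdot(\text{factor})$ cannot be obtained from an arbitrary adapted cover: the whole point of the choice $\U=\{D_0\cup D_i\}_{i=1}^{k}$ is that its members are unions of exactly two partition elements, which is what produces the explicit $2^n$ count. Your intuition that \eqref{eq:A8} makes the refinement scheme $\tilde{\P}_k$ agree up to $\mu$-null sets with an honest pullback is correct and is exactly what the paper uses (via the computation in \cite[Proposition 1]{VS}), but note that \eqref{eq:A8} is needed in \emph{two} places, not one: both to compare $\bigvee_{i=0}^{n-1}\vp_+^{-i}(\beta)$ with $\bigvee_{i=0}^{n-1}\tilde{\beta}_i$ up to null sets, and to justify the subadditivity $h_\mu(\vp,\P)\leq h_\mu(\vp,\beta)+H_\mu(\P\mid\beta)$, which fails for general multivalued maps.
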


\begin{proof}
%First of all, since $\mu(\vp^{-1}_+(A))\geq\mu(A)$ holds by Definition~\ref{d:2}, $\vp^{-1}_+(A)\cap\vp^{-1}_+(B) \supset \vp^{-1}_+(A\cap B)$ in general by e.g. \cite[Proposition 1.2.2]{HP}, and subsequently $\mu(\vp^{-1}_+(A)\cap\vp^{-1}_+(B)) \geq \mu(\vp^{-1}_+(A\cap B))$, we can rewrite \eqref{eq:A8} without loss of generality into 
%\begin{align}\label{eq:A10}
%\mu(\vp^{-1}_+(A))=\mu(A) \mbox{ and } \mu(\vp^{-1}_+(A)\cap\vp^{-1}_+(B)) = \mu(\vp^{-1}_+(A\cap B),
%\end{align}
%for all $A,B\in\B$.

Let $\P=\{P_1,P_2,\ldots, P_k\}$ be a finite ordered measurable partition of $X$. Choose $\ep>0$ so that $\ep\leq \frac1{k \log k}$. Since $\mu$ is regular, there exist compact sets $D_i\subset P_i$, $i=1,\ldots,k$, such that $\mu(P_i\setminus D_i)<\ep$. Let $\beta$ be the partition $\beta=\{D_0,D_1,\ldots,D_k\}$, where $D_0= X\setminus\bigcup_{i=1}^{k} D_i$.

Clearly, $\mu(D_0) \leq k\ep$, and the conditional entropy of a partition $\P$ with respect to $\beta$ satisfies
\begin{align}
H_{\mu}(\P\vert\beta) &= -\sum_{i=0}^{k}\sum_{j=1}^{k} \mu(D_i)\Phi\left(\frac{\mu(D_i\cap P_j)}{\mu(D_i)}\right)\nonumber \\ 
&= - \mu(D_0)\sum_{j=1}^{k} \Phi\left( \frac{\mu(D_0\cap P_j)}{\mu(D_0)}\right)  \label{eq:2} \\ 
&\leq \mu(B_0)\log k < k\ep \log k < 1. \label{eq:3}
\end{align}

The equality \eqref{eq:2} is satisfied, because if $i\neq 0$, then $\frac{\mu(P_i\cap D_j)}{\mu(P_i)} \in\{0,1\}$ and $\Phi(0)=\Phi(1)=0$. The inequality \eqref{eq:3} follows from the convexity of $\Phi$.

We define an open cover $\U=\{D_0\cup D_1,\ldots,D_0\cup D_k\}$. Now, for $n\geq 1$, according to Lemma~\ref{l:1},
\[
H_{\mu}(\bigvee_{i=0}^{n-1} \tilde{\beta_i}) \leq \log \NZ(\bigvee_{i=0}^{n-1} \tilde{\beta_i}).
\]

If $U\in\bigvee_{i=0}^{n-1} \vp^{-i}_{+}(\U)$, then (see e.g. \cite[Proposition 1.2.2]{HP}),
\begin{align*}
U = \bigcap_{i=0}^{n-1} \vp_{+}^{-i}(D_0\cup D_{j_i}) = \bigcap_{i=0}^{n-1} \left(\vp_{+}^{-i}(D_0)\cup\vp_{+}^{-i}(D_{j_i})\right),
\end{align*}
where $j_i\in \{1,\ldots,k\}$, $i\in\{0,\ldots,n-1\}$.

It follows that each element of $\bigvee_{i=0}^{n-1}\vp^{-i}_{+}(\U)$ can be written as a union of $2^n$ sets of $\bigvee_{i=0}^{n-1}\vp_{+}^{-i}(\beta)$. Each set of $\bigvee_{i=0}^{n-1}\vp_{+}(\beta)$ contains a set of $\bigvee_{i=0}^{n-1} \tilde{\beta_i}$ and the difference of these two corresponding sets has $\mu$-measure zero. Justification of this observation requires a simple and straightforward computation which has been done in the proof of \cite[Proposition 1]{VS}. 

Moreover, the sets of $\bigvee_{i=0}^{n-1} \tilde{\beta_i}$ are pairwise disjoint. We prove that
\[
\NZ(\bigvee_{i=0}^{n-1}\tilde{\beta}_i)\leq 2^n \NN(\bigvee_{i=0}^{n-1}\vp^{-i}_{+}(\U)).
\]
Assume the contrary. Then there is $U\in\bigvee_{i=0}^{n-1}\vp^{-i}_{+}(\U)$ containing more than $2^n$ elements of $\bigvee_{i=0}^{n-1}\tilde{\beta}_i$ with a positive measure. This is impossible, because $U$ is a union of at most $2^n$ elements of $\bigvee_{i=0}^{n-1}\tilde{\beta}_i$ with a positive $\mu$-measure and a set of $\mu$-measure zero.

Therefore,
\begin{align}
H_{\mu}(\bigvee_{i=0}^{n-1}\tilde{\beta}_i)&\leq \log  2^n \NN(\bigvee_{i=0}^{n-1}\vp^{-i}_{+}(\U)) = \log(\bigvee_{i=0}^{n-1}\vp^{-i}_{+}(\U)) + n\log 2,\nonumber\\
h_{\mu}(\vp,\P)  &\leq h_{\mu}(\vp,\beta) + H_{\mu}(\P\vert\beta) = \limsup_{n\to\infty} \frac1{n}H_{\mu}(\bigvee_{i=0}^{n-1}\tilde{\beta}_i) + H_{\mu}(\P\vert\beta) \label{eq:con}\\
&\leq \limsup_{n\to\infty} \frac1{n} \log(\bigvee_{i=0}^{n-1}\vp^{-i}_{+}(\U)) +\log 2 + 1 = h_{+}(\vp,\U) + \log 2 + 1\nonumber\\
&\leq h_{+}(\vp) + \log 2 + 1 \nonumber.
\end{align}

The crucial inequality \eqref{eq:con} was proved as in \cite[condition (6)]{VS}, under the assumptions \eqref{eq:A8}.

Finally, taking the supremum over all finite ordered measurable partitions $\P$ we obtain
\begin{align}\label{eq:semi-var-0}
h_{\mu}(\vp) \leq h_{+}(\vp) + \log 2 + 1.
\end{align}

We intend to apply \eqref{eq:semi-var-0} for $\vp^n:X\multimap X$, where $n\in\N$. We need to verify the assumptions. Firstly, the multivalued map $\vp^n$ is l.s.c., for all $n\in\N$, by \cite[Proposition 1.2.56]{HP}. Secondly, the condition \eqref{eq:A8} for $\vp^n$, $n\in\N$, can be proved by the mathematical induction. Thus, $\mu\in\M_1(\vp^n)$ and
\[
h_{\mu}(\vp^n) \leq h_{+}(\vp^n) + \log 2 + 1.
\]

By \cite[Theorem 3.1]{VS} and Lemma~\ref{l:iter}, we obtain
\begin{align*}
n \cdot h_{\mu}(\vp) &\leq h_{\mu}(\vp^n) \leq h_{+}(\vp^n) + \log 2 + 1 \leq n \cdot h_{+}(\vp) + \log 2 + 1,\\
h_{\mu}(\vp) &\leq h_{+}(\vp) + \frac{\log 2 + 1}{n}.
\end{align*}

For $n$ tending to infinity, we arrive at the desired inequality \eqref{eq:hvp}.
\end{proof}

\begin{remark}
Since $\mu(\vp^{-1}_+(A))\geq\mu(A)$ holds by Definition~\ref{d:2}, $\vp^{-1}_+(A)\cap\vp^{-1}_+(B)\supset\vp^{-1}_+(A\cap B)$ in general by e.g. \cite[Proposition 1.2.2]{HP}, and subsequently
\[
\mu(\vp^{-1}_+(A)\cap\vp^{-1}_+(B))\geq \mu(\vp^{-1}_+(A\cap B))
\]
we can rewrite \eqref{eq:A8} without any loss of generality into
\[
\mu(\vp_{+}^{-1}(A))\leq\mu(A) \mbox{ and } \mu(\vp_{+}^{-1}(A)\cap\vp_{+}^{-1}(B)) \leq \mu(\vp_{+}^{-1}(A\cap B)),
\]
for all $A,B\in\B$.
\end{remark}

The inequality \eqref{eq:A6} in Theorem~\ref{t:1} can be then easily improved, under the additional conditions in Theorem~\ref{t:2}, in the following way.

\begin{corollary}\label{c:1}
Under the assumptions of Theorem~\ref{t:2}, where $\vp: X\to\KK(X)$, the inequalities
\[
\hKT(\vp) \geq h_+(\vp) \geq h_{\mu}(\vp)
\]
hold, where $\hKT(\vp)$ denotes the topological entropy of $\vp$ in the sense of Definition~\ref{d:7} (see Remark~\ref{r:2}).
\end{corollary}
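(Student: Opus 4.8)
The plan is to obtain the two inequalities separately from results already at our disposal and then simply concatenate them. For the right-hand inequality $h_+(\vp)\geq h_{\mu}(\vp)$, observe that the hypotheses of Corollary~\ref{c:1} are precisely those of Theorem~\ref{t:2}: a map $\vp\colon X\to\KK(X)$ is in particular an l.s.c.\ multivalued map on the compact metric space $X$, and $\mu\in\M_1(\vp)$ is assumed to satisfy \eqref{eq:A8}. Hence Theorem~\ref{t:2} applies verbatim and yields $h_{\mu}(\vp)\leq h_+(\vp)$ directly; note that, unlike Theorem~\ref{t:1}, no supremum over measures is taken, since the statement is pointwise in the fixed $\mu$.

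For the left-hand inequality $\hKT(\vp)\geq h_+(\vp)$, note that $\vp\colon X\to\KK(X)$ is an l.s.c.\ map with compact values on a compact metric space, which is exactly the setting of Lemma~\ref{l:4}; thus \eqref{eq:A3} gives $h_+(\vp)\leq\hKT(\vp)$. Chaining this with the previous inequality produces $\hKT(\vp)\geq h_+(\vp)\geq h_{\mu}(\vp)$, as claimed.

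There is essentially no obstacle here; the only point worth a remark is that the compact-valuedness assumption $\vp\colon X\to\KK(X)$ (imposed so that Lemma~\ref{l:4} is available) is consistent with — and indeed strictly stronger than — the l.s.c.\ hypothesis needed for Theorem~\ref{t:2}, which it plainly is. One could alternatively route the left-hand inequality through Lemma~\ref{l:2} and Lemma~\ref{l:3} by first producing a continuous selection $f\subset\vp$ (which exists by Proposition~\ref{p:1} when the values are additionally convex and $X$ lies in a Banach space) and writing $h_+(\vp)\leq h(f)\leq\hKT(\vp)$; but invoking Lemma~\ref{l:4} is the cleaner route, as it requires neither a selection nor convexity of the values.
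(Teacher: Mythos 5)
Your proof is correct and follows exactly the paper's own argument: the right-hand inequality is Theorem~\ref{t:2} applied to the fixed $\mu$, and the left-hand inequality is \eqref{eq:A3} from Lemma~\ref{l:4}. Nothing further is needed.
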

\begin{proof}
The conclusion follows directly from Theorem~\ref{t:2}, by means of \eqref{eq:A3} in Lemma~\ref{l:4}.
\end{proof}

Another extension of Theorem~\ref{t:2} reads as follows.
\begin{corollary}\label{c:2}
Under the assumptions of Theorem~\ref{t:2}, where $\vp:X\to\KK(X)$ is additionally continuous, the relations
\[
\sup_{\nu\in\M_1(\vp^*)}h_\nu(\vp^*) = h(\vp^*) \geq h_+(\vp) \geq h_\mu(\vp)
\]
are satisfied for the topological entropy $h(\vp^*):=h_+(\vp^*)$ and the metric entropy $h_\nu(\vp^*)$, $\nu\in\M_1(\vp^*)$, of the single-valued continuous hypermap $\vp^*:\KK(X)\to\KK(X)$, where $\vp^*(A):=\bigcup_{x\in A} \vp(x)$, on a compact hyperspace $\KK(X)$ endowed with the Hausdorff metric $\dH$, i.e.
\[
\dH(A,B) := \max \{\sup_{a\in A}(\inf_{b\in B} d(a,b), \sup_{b\in B}(\inf_{a\in A} d(a,b)))\},
\]
for $A,B\in\KK(X)$.
\end{corollary}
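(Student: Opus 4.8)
The plan is to split the displayed chain into three independent pieces and handle each separately: the classical variational principle applied to the single-valued hypermap $\vp^{*}$, the comparison $h_{+}(\vp)\le h_{+}(\vp^{*})$, and the conclusion of Theorem~\ref{t:2}.

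First I would fix the hyperspace set-up. Since $X$ is a compact metric space, $(\KK(X),\dH)$ is again a compact metric space and its topology is the Vietoris topology; in particular, for every open $U\subseteq X$ the set $\widehat U:=\{K\in\KK(X):K\cap U\ne\emptyset\}$ is open in $\KK(X)$. Because $\vp$ is continuous with compact values, $\vp^{*}(A)=\bigcup_{x\in A}\vp(x)\in\KK(X)$ for every $A\in\KK(X)$ (cf.\ the discussion after Definition~\ref{d:1}), the iterates satisfy $(\vp^{*})^{j}=(\vp^{j})^{*}$ for all $j\in\N$, and $\vp^{*}\colon\KK(X)\to\KK(X)$ is continuous with respect to $\dH$ (see e.g.\ \cite{HP,AF}). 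For a single-valued map $g$ one has $g^{-1}_{+}=g^{-1}$, so Definitions~\ref{d:3} and~\ref{d:4} reduce, for $\vp^{*}$, to the classical measure-theoretic and topological entropies, and the invariance condition of Definition~\ref{d:2} reduces to $\mu\circ(\vp^{*})^{-1}=\mu$ (apply it to a set and to its complement). Hence $\M_{1}(\vp^{*})$ is the usual set of $\vp^{*}$-invariant Borel probability measures, and the Dinaburg--Goodman variational principle \cite{Di,Go3} yields the first relation $\sup_{\nu\in\M_{1}(\vp^{*})}h_{\nu}(\vp^{*})=h_{+}(\vp^{*})=:h(\vp^{*})$.

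The heart of the argument is the inequality $h_{+}(\vp)\le h_{+}(\vp^{*})$. Given a finite open cover $\A=\{A_{1},\dots,A_{m}\}$ of $X$, set $\widehat\A:=\{\widehat A_{1},\dots,\widehat A_{m}\}$, which is a finite open cover of $\KK(X)$ by the previous paragraph. Using $(\vp^{*})^{j}(K)=\bigcup_{x\in K}\vp^{j}(x)$ one checks the preimage identity $(\vp^{*})^{-j}(\widehat A_{i})=\widehat{\vp^{-j}_{+}(A_{i})}$ for every $j\ge 0$, so that every member of $\bigvee_{j=0}^{n-1}(\vp^{*})^{-j}(\widehat\A)$ is of the form $W=\{K\in\KK(X):K\cap\vp^{-j}_{+}(A_{i_{j}})\ne\emptyset,\ j=0,\dots,n-1\}$ for some index string $(i_{0},\dots,i_{n-1})$. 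I would then pick a minimal subcover $\mathcal W$ of $\bigvee_{j=0}^{n-1}(\vp^{*})^{-j}(\widehat\A)$ and evaluate its members on the singletons $\{x\}\in\KK(X)$: since $\{x\}\in W$ if and only if $x\in\bigcap_{j=0}^{n-1}\vp^{-j}_{+}(A_{i_{j}})$, the nonempty sets $\bigcap_{j=0}^{n-1}\vp^{-j}_{+}(A_{i_{j}})$ attached to the members of $\mathcal W$ form a subcover of $\bigvee_{j=0}^{n-1}\vp^{-j}_{+}(\A)$ of $X$. This forces $\NN\bigl(\bigvee_{j=0}^{n-1}\vp^{-j}_{+}(\A)\bigr)\le\NN\bigl(\bigvee_{j=0}^{n-1}(\vp^{*})^{-j}(\widehat\A)\bigr)$ for every $n$; dividing by $n$ and letting $n\to\infty$ gives $h_{+}(\vp,\A)\le h_{+}(\vp^{*},\widehat\A)\le h_{+}(\vp^{*})$, and taking the supremum over all finite open covers $\A$ of $X$ yields the second relation $h_{+}(\vp)\le h_{+}(\vp^{*})=h(\vp^{*})$.

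Finally, the hypotheses of Theorem~\ref{t:2} are precisely those assumed here --- $\vp$ is l.s.c.\ because it is continuous, and $\mu\in\M_{1}(\vp)$ satisfies \eqref{eq:A8} --- so that theorem gives the third relation $h_{\mu}(\vp)\le h_{+}(\vp)$, and concatenating the three relations completes the proof. I expect the main obstacle to be the middle step: one must verify that $\widehat\A$ is genuinely an open cover of $\KK(X)$ (a Vietoris-topology fact), establish the identity $(\vp^{*})^{-j}(\widehat A_{i})=\widehat{\vp^{-j}_{+}(A_{i})}$, and --- the decisive point --- notice that restricting a subcover of the joined hyperspace cover to singletons produces a subcover of the corresponding join over $X$, which is what forces the entropy inequality in the correct direction. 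The remaining ingredients (compactness of $(\KK(X),\dH)$, continuity of $\vp^{*}$, and the classical variational principle) are standard and only need to be quoted.
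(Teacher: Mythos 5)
Your proposal is correct, and its skeleton is exactly the paper's: the chain is split into (a) the classical Dinaburg--Goodman variational principle for the single-valued continuous hypermap $\vp^*$ on the compact metric space $(\KK(X),\dH)$, (b) the comparison $h_+(\vp)\le h(\vp^*)$, and (c) the inequality $h_\mu(\vp)\le h_+(\vp)$ from Theorem~\ref{t:2}. The only divergence is in step (b): the paper simply cites \cite[Theorem 7]{AL1} for the relation between $h_+(\vp)$ and the topological entropy of the induced hypermap, whereas you reprove it from scratch via the Vietoris-topology cover $\widehat\A=\{\widehat A_1,\dots,\widehat A_m\}$, the preimage identity $(\vp^*)^{-j}(\widehat A_i)=\widehat{\vp^{-j}_{+}(A_i)}$ (which rests on $(\vp^*)^j=(\vp^j)^*$), and the observation that evaluating a subcover of $\bigvee_{j=0}^{n-1}(\vp^*)^{-j}(\widehat\A)$ on singletons $\{x\}$ produces a subcover of $\bigvee_{j=0}^{n-1}\vp^{-j}_{+}(\A)$, forcing $\NN\bigl(\bigvee_{j=0}^{n-1}\vp^{-j}_{+}(\A)\bigr)\le\NN\bigl(\bigvee_{j=0}^{n-1}(\vp^*)^{-j}(\widehat\A)\bigr)$. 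That argument is sound (the singleton trick is the standard way to transfer cover counts from the hyperspace down to $X$, and it correctly sidesteps the fact that $\bigcap_j\widehat{\vp^{-j}_{+}(A_{i_j})}$ is generally larger than $\widehat{\bigcap_j\vp^{-j}_{+}(A_{i_j})}$), so your version buys a self-contained proof of the middle link at the cost of redoing work the paper delegates to \cite{AL1}; the housekeeping you flag (compactness of $\vp^*(A)$ for u.s.c.\ $\vp$, continuity of $\vp^*$ for continuous $\vp$, and the reduction of Definitions~\ref{d:2}, \ref{d:3}, \ref{d:4} to the classical single-valued notions) is all as the paper intends.
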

\begin{proof}
The conclusion follows directly from Theorem~\ref{t:2}, by means of \cite[Theorem 7]{AL1} and the standard variational principle for single-valued continuous maps in \cite{Go3,Mi,Wa}.
\end{proof}

\begin{remark}\label{r:4}
Because of the guaranteed existence of an invariant measure $\mu\in\M_1(\vp)$ discussed in Section~\ref{sec:2}, the multivalued maps $\vp:X\to\KK(X)$ in Theorem~\ref{t:2} and its Corollaries~\ref{c:1}, \ref{c:2} need not have convex values and need not be considered in a Banach space, provided they are, for instance, continuous on a compact metric space $X$. Let us note that, in Theorem~\ref{t:1}, the authors have assumed neither the upper semicontinuity of maps $\vp$ nor the convexity of their values for the existence of invariant measures. In fact, they defined incorrectly the closed maps as closed-valued maps, but not as those having a closed graph on a compact set, i.e. not as u.s.c. maps. Of course, both above conditions are sufficient, but not necessary. In other words, the existence of invariant measures $\mu\in\M_1(\vp)$ in Theorem~\ref{t:2} and its Corollaries~\ref{c:1}, \ref{c:2} can be supposed explicitly just for l.s.c. maps $\vp:X\to\KK(X)$ on a compact metric space $X$ or one can assume that $\vp$ possesses a single-valued continuous selection $f\subset\vp$.
\end{remark}

The second problematic theorem concerns the lower estimate of metric entropy.

\begin{thm}[cf. {\cite[Theorem 3.7]{VS}}]\label{t:3}
Let $\vp:X\to\KK(X)$ be an l.s.c. multivalued map on a compact metric space $X$. If $h_\mu(\vp)\geq h_\mu(f)$ holds for every single-valued continuous selection $f\subset\vp$ of $\vp$ and every $f$-invariant measure $\mu$, then the inequality
\begin{align}\label{eq:A11}
\sup_{\mu\in\M_1(\vp)} h_\mu(\vp) \geq \hCM^{\sepp}(\vp)
\end{align}
is satisfied for the entropies $h_\mu(\vp)$ and $\hCM^{\sepp}(\vp)$ of $\vp$ in the sense of Definitions~\ref{d:3} and~\ref{d:9}, where $\M_1(\vp)$ stands for the set of $\vp$-invariant measures.
\end{thm}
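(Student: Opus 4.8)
The plan is to reduce the multivalued inequality \eqref{eq:A11} to the classical variational principle by passing to a single-valued continuous selection. First I would fix a single-valued continuous selection $f\subset\vp$; its existence is exactly the point at which an extra hypothesis is needed (see below), but it is furnished by Proposition~\ref{p:1} once $X$ lies in a Banach space and the values of $\vp$ are convex. For such an $f$, Lemma~\ref{l:6} gives $\hCM^{\sepp}(\vp)\le h(f):=\hCM^{\sepp}(f)$, and, since the pseudometric $d_n^{\CM}$ collapses to the Bowen metric of $f$ when $\vp$ is single-valued, $\hCM^{\sepp}(f)$ is just the ordinary topological entropy $h(f)$, to which the classical variational principle for single-valued continuous maps (see e.g. \cite[Theorem 8.6]{Wa}) applies. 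Hence
\[
\hCM^{\sepp}(\vp)\ \le\ h(f)\ =\ \sup_{\mu\in\M_1(f)} h_\mu(f),
\]
where $\M_1(f)$ denotes the set of $f$-invariant Borel probability measures.

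The second ingredient is the inclusion $\M_1(f)\subseteq\M_1(\vp)$. Since $f(x)\in\vp(x)$ for every $x\in X$, one has $f^{-1}(A)\subseteq\vp^{-1}_+(A)$ for every $A\in\B$, so that for any $f$-invariant $\mu$,
\[
\mu\bigl(\vp^{-1}_+(A)\bigr)\ \ge\ \mu\bigl(f^{-1}(A)\bigr)\ =\ \mu(A),
\]
i.e. $\mu$ is $\vp$-invariant in the sense of Definition~\ref{d:2}. Applying the hypothesis of the theorem to this $f$ and to each $\mu\in\M_1(f)$ yields $h_\mu(f)\le h_\mu(\vp)$, whence
\[
\hCM^{\sepp}(\vp)\ \le\ \sup_{\mu\in\M_1(f)} h_\mu(f)\ \le\ \sup_{\mu\in\M_1(f)} h_\mu(\vp)\ \le\ \sup_{\mu\in\M_1(\vp)} h_\mu(\vp),
\]
which is precisely \eqref{eq:A11}.

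The main obstacle — and the source of the ``restrictive additional assumptions'' announced in the abstract — is the very first step. An l.s.c. map with merely compact values need not admit any single-valued continuous selection, and then Lemma~\ref{l:6} and the classical variational principle are simply unavailable, the assumption $h_\mu(f)\le h_\mu(\vp)$ is met vacuously, and nothing keeps $\hCM^{\sepp}(\vp)$ from exceeding $\sup_{\mu\in\M_1(\vp)}h_\mu(\vp)$. Thus the argument above establishes \eqref{eq:A11} only under a hypothesis that forces a continuous selection to exist — most naturally that $X$ is a compact subset of a Banach space and $\vp$ has convex values (so Proposition~\ref{p:1} applies), or else the explicit assumption that $\vp$ admits such a selection. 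A minor routine point to verify along the way is the reduction $\hCM^{\sepp}(f)=h(f)$ for single-valued continuous $f$, which is immediate from the definitions of $d_n^{\CM}$ and of $(\ep,n)_{\CM}$-separated sets; by Lemma~\ref{l:5} the same bound then holds for $\hCM^{\spa}(\vp)$ as well.
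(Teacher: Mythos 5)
Your proposal matches the paper's treatment exactly: the paper does not prove Theorem~\ref{t:3} as stated, but rather points out that its original proof rests on the false claim (Lemma~\ref{l:7}) that every l.s.c.\ compact-valued map admits a continuous selection, and then reformulates the result as Proposition~\ref{p:2} under precisely the hypotheses you identify ($X$ a compact subset of a Banach space, $\vp$ convex-valued, so that Proposition~\ref{p:1} applies), proving it by the same chain $\hCM^{\sepp}(\vp)\le h(f)=\sup_{\mu\in\M_1(f)}h_\mu(f)\le\sup_{\mu\in\M_1(\vp)}h_\mu(\vp)$. Your explicit verification of the inclusion $\M_1(f)\subseteq\M_1(\vp)$ is a useful detail the paper leaves implicit; otherwise the argument and the diagnosis of the gap coincide with the paper's.
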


The proof of Theorem~\ref{t:3} was based, besides other things, on the following lemma.

\begin{lemma}[cf. {\cite[Lemma 3.6]{VS}}]\label{l:7}
Every l.s.c. multivalued map $\vp:X\to\KK(X)$ on a compact metric space $X$ admits a single-valued continuous selection $f\subset\vp$ of $\vp$, i.e. $f(x)\in\vp(x)$, for every $x\in X$.
\end{lemma}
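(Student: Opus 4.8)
The plan is to derive the asserted selection directly from the Michael selection theorem as recorded in Proposition~\ref{p:1}. Since Proposition~\ref{p:1} is stated for l.s.c.\ maps into a Banach space with closed convex values, the first step is to place the problem in that setting: as $X$ is a compact metric space, I would embed it isometrically into a Banach space $E$ (for instance via the \Fr--Kuratowski embedding of $X$ into $C(X)$, or into $\ell^\infty$), and regard $\vp$ as an l.s.c.\ map from the compact metric space $X$ into the closed subsets of $E$. The compact values $\vp(x)\in\KK(X)$ are then closed subsets of $E$, so the lower semicontinuity and closedness requirements of Proposition~\ref{p:1} are satisfied, and the only hypothesis that remains to be arranged is the convexity of the values.

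Meeting that last hypothesis is exactly where the real work lies. The natural device is to pass to the closed convex hulls $\overline{\mathrm{co}}\,\vp(x)$ taken in $E$: this convexified multivalued map inherits lower semicontinuity and has closed convex values, so Proposition~\ref{p:1} yields a continuous selection $g\colon X\to E$ with $g(x)\in\overline{\mathrm{co}}\,\vp(x)$ for every $x\in X$. The plan is then to correct $g$ back onto $\vp$ itself, producing a continuous $f$ with $f(x)\in\vp(x)\subset X$, which is the selection demanded by the lemma.

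I expect this correction step to be the main obstacle. A point of $\overline{\mathrm{co}}\,\vp(x)$ need not belong to $\vp(x)$, and in general need not even belong to $X$, so there is no canonical way to project $g(x)$ continuously onto the (possibly non-convex) compact set $\vp(x)$. This is precisely the point at which the absence of a convex structure on the values, and on $X$ itself, becomes critical: the continuity of such a correction is a genuine topological constraint rather than a routine estimate. I would therefore anticipate that establishing the lemma in the full generality stated — with no convexity assumed on the values and with $X$ merely a compact metric space — hinges entirely on resolving this obstruction, which is the delicate heart of the argument.
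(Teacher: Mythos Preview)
Your instinct that the convexity obstruction is the ``delicate heart of the argument'' is correct, but it is more than delicate: it is fatal. The lemma as stated is \emph{false}, and the paper does not prove it. On the contrary, the paper quotes this lemma from \cite{VS} precisely in order to refute it by a counter-example. The example is the continuous multivalued map $\vp:D\to\KK(\R^2)$ on the closed unit disc $D$ defined by $\vp(0)=S^1$ and, for $x\neq 0$, $\vp(x)=S^1\setminus\{y:\|y-x\|x\|^{-1}\|<\|x\|\}$. This map is l.s.c.\ (indeed continuous) with compact values, yet any continuous selection $f\subset\vp$ would have a Brouwer fixed point $\hat{x}\in\vp(\hat{x})\subset S^1$, and one checks that $\hat{x}\notin\vp(\hat{x})$, a contradiction.

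So the ``correction step'' you flagged --- pushing a selection of $\overline{\mathrm{co}}\,\vp(x)$ back onto $\vp(x)$ continuously --- cannot be carried out in general, and no alternative strategy will succeed either. The paper's point is exactly that the Michael selection theorem (Proposition~\ref{p:1}) genuinely requires convex values, and that dropping this hypothesis, as \cite{VS} did, invalidates the selection argument underpinning their Theorem~3.7. Your write-up would be improved by stating outright that the claim is false and exhibiting (or citing) the counter-example, rather than presenting a proof sketch that stalls at an unresolvable obstruction.
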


Lemma~\ref{l:7} is evidently false (even for continuous multivalued maps), as documented by the following example.

\begin{example}[cf. {\cite[Example 2.10]{ATDZ}} and {\cite[Example 1.4.4]{HP}}]
Let  $D = \{x\in\R^2: \|x\|^2 = x_1^2+x_2^2 \leq 1\}$ be the closed unit disc in $\R^2$ and $S^1 = \partial D$ its boundary. Let $\vp:D\to\KK(\R^2)$be defined as
\[
\vp(x)=
\begin{cases}
&S^1\setminus\{y\in\R^2: \|y-x\|y\|^{-1}\| <\|x\|\}, \mbox{ for $x\neq 0$,}\\
&S^1, \mbox{ for $x=0$.}
\end{cases}
\]

Then $\vp$ is l.s.c. (even continuous) on $D$, but does not admit any continuous selection, because if it exists, then $\hat{x}$ would be its fixed point (by the Brower fixed point theorem) such that  $\hat{x} = f(\hat{x})\in \vp(\hat{x}) \subset S^1$ and $\hat{x}\neq 0$. Hence, $\|\hat{x} - \hat{x}\|\hat{x}\|^{-1}\|\geq \|\hat{x}\| = 1$, a contradiction.
\end{example}

As pointed out in \cite[p. 14]{ATDZ}, apart from the required convexity, the assertion of the Michael selection theorem (see Proposition~\ref{p:1}) becomes false without the closedness assumption on the images $\vp(x)$, or without the completeness assumption on the normed space $Y$ in Proposition~\ref{p:1}. On the other hand, in some special Banach spaces $Y$, the convexity of values of $\vp(x)$ can be replaced by their decomposability in the sense of Rockafellar (see e.g. \cite{Fr}). Therefore, for another sufficient condition for the existence of a single-valued continuous selection $f\subset\vp$ of an l.s.c. map $\vp:X\to\KK(X)$ on a compact subset of the same special Banach spaces, it is enough that $\vp$ has decomposable values.

Correcting the proof of Theorem~\ref{t:3} by means of Proposition~\ref{p:1}, instead of the wrong Lemma~\ref{l:7}, the obtained statement can be formulated in the form of the following proposition.

\begin{prop}\label{p:2}
Let $\vp:X\to\KK(X)$ be a convex-valued l.s.c. multivalued map on a compact subset $X$ of a Banach space. If $h_\mu(\vp)\geq h_\mu(f)$ is satisfied for every single-valued continuous selection $f\subset\vp$ of $\vp$ and every $f$-invariant measure $\mu$, then (cf. \eqref{eq:A11})
\begin{align*}
\sup_{\mu\in\M_1(\vp)} h_\mu(\vp) \geq \hCM^{\sepp}(\vp)\geq \hCM^{\spa}(\vp),
\end{align*}
and
\begin{align*}
\sup_{\mu\in\M_1(\vp)} h_\mu(\vp) \geq h_+(\vp)
\end{align*}
hold for the entropies $h_\mu(\vp)$, $h_+(\vp)$, $\hCM^{\sepp}(\vp)$ and $\hCM^{\spa}(\vp)$ of $\vp$ in the sense of Definitions~\ref{d:3}, \ref{d:4} and~\ref{d:9}, respectively.
\end{prop}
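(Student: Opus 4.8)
The plan is to reduce the whole statement to the classical variational principle for a single-valued continuous selection, whose existence is now guaranteed (in this restricted setting) by the Michael selection theorem rather than by the false Lemma~\ref{l:7}.

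First I would invoke Proposition~\ref{p:1}: since $\vp:X\to\KK(X)$ is l.s.c. with closed convex values on a compact subset $X$ of a Banach space, there exists a single-valued continuous selection $f\subset\vp$, $f:X\to X$. This is exactly the step at which the convexity and Banach-space hypotheses are used, and it is why the corrected statement is a proposition about this subclass rather than about arbitrary l.s.c. closed-valued maps. Next I would record the elementary inclusion $\M_1(f)\subset\M_1(\vp)$: if $\mu$ is $f$-invariant, then $f(x)\in\vp(x)$ forces $f^{-1}(A)\subset\vp^{-1}_+(A)$ for every $A\in\B$, hence $\mu(\vp^{-1}_+(A))\geq\mu(f^{-1}(A))=\mu(A)$, so $\mu\in\M_1(\vp)$. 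Combining this with the standing hypothesis $h_\mu(\vp)\geq h_\mu(f)$ (which holds for every continuous selection of $\vp$ and every invariant measure of it), one obtains
\[
\sup_{\mu\in\M_1(f)} h_\mu(f) \;\leq\; \sup_{\mu\in\M_1(f)} h_\mu(\vp) \;\leq\; \sup_{\mu\in\M_1(\vp)} h_\mu(\vp).
\]

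Then I would apply the classical variational principle for the single-valued continuous map $f$ on the compact metric space $X$ (Goodman, Misiurewicz; see \cite{Go3,Mi,Wa}), namely $h(f)=\sup_{\mu\in\M_1(f)} h_\mu(f)$, where $h(f)$ coincides with $h_+(f)$, $\hCM^{\sepp}(f)$ and $\hCM^{\spa}(f)$ because all these notions agree in the single-valued case. Feeding in Lemma~\ref{l:6} (which yields $\hCM^{\sepp}(\vp)\leq h(f)$) and Lemma~\ref{l:2} (which yields $h_+(\vp)\leq h(f)$), together with the chain above, gives
\[
\hCM^{\sepp}(\vp)\;\leq\; h(f)\;=\;\sup_{\mu\in\M_1(f)} h_\mu(f)\;\leq\;\sup_{\mu\in\M_1(\vp)} h_\mu(\vp),
\]
and in the same way $h_+(\vp)\leq\sup_{\mu\in\M_1(\vp)} h_\mu(\vp)$. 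The only remaining inequality, $\hCM^{\sepp}(\vp)\geq\hCM^{\spa}(\vp)$, is precisely Lemma~\ref{l:5}; collecting everything produces both displayed chains.

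I do not expect a genuinely hard step here: once Lemma~\ref{l:7} is replaced by Proposition~\ref{p:1}, the analytic content is entirely packaged into the Michael selection theorem and the single-valued variational principle. The two places that require care are (a) checking $\M_1(f)\subset\M_1(\vp)$, so that the hypothesis on $h_\mu(\vp)$ is actually applicable to the measures produced by the single-valued variational principle, and (b) ensuring the hypothesis is invoked only for selections that exist — which is exactly what the convexity plus completeness assumptions secure, and the reason the proposition cannot be stated in the full generality of Theorem~\ref{t:3}.
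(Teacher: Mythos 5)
Your proposal is correct and follows essentially the same route as the paper's own proof: Michael's selection theorem (Proposition~\ref{p:1}) replaces the false Lemma~\ref{l:7}, the classical single-valued variational principle is applied to the selection $f$, and Lemmas~\ref{l:2}, \ref{l:5} and~\ref{l:6} supply the comparisons between $h(f)$ and the multivalued entropies. The only difference is that you make explicit the inclusion $\M_1(f)\subset\M_1(\vp)$, which the paper uses tacitly in the chain $\sup_{\mu\in\M_1(\vp)} h_\mu(\vp)\geq\sup_{\mu\in\M_1(f)} h_\mu(f)$; this is a welcome clarification but not a different argument.
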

\begin{proof}
By the standard variational principle for single-valued continuous maps (see e.g. \cite{Mi}, \cite{Wa}), we have
\[
\sup_{\mu\in\M_1(f)} h_\mu(f) = h(f),
\]
for every single-valued continuous selection $f\subset \vp$ of $\vp$, whose existence if guaranteed by Proposition~\ref{p:1}.

Furthermore, in view of \eqref{eq:A4} in Lemma~\ref{l:5}, \eqref{eq:A5} in Lemma~\ref{l:6}, and \eqref{eq:A1} in Lemma~\ref{l:2},
\[
h(f) \geq \hCM^{\sepp}(\vp)\geq\hCM^{\spa}(\vp),
\]
and 
\[
h(f) \geq h_+(\vp).
\]

Thus, by virtue of the hypothesis 
\[
h_\mu(\vp)\geq h_\mu(f),
\]
we arrive in the end at
\begin{align*}
\sup_{\mu\in\M_1(\vp)} h_\mu(\vp) &\geq \sup_{\mu\in\M_1(f)} h_\mu(f) = h(f) \geq \hCM^{\sepp}(\vp)\geq \hCM^{\spa}(\vp),\\
\sup_{\mu\in\M_1(\vp)} h_\mu(\vp) &\geq \sup_{\mu\in\M_1(f)} h_\mu(f) = h(f) \geq h_+(\vp),
\end{align*}
which completes the proof.
\end{proof}

Replacing the assumed inequality by the reverse one, we can also give by analogous arguments the following proposition, related rather to Theorem~\ref{t:1} (cf. \eqref{eq:A6}) than Theorem~\ref{t:3}.

\begin{prop}\label{p:3}
Let $\vp:X\to\KK(X)$ be a convex-valued l.s.c. multivalued map on a compact subset $X$ of a Banach space. If $h_\mu(\vp)\leq h_\mu(f)$ is satisfied for every single-valued continuous selection $f\subset\vp$ of $\vp$ and every $f$-invariant measure $\mu$, then (cf. \eqref{eq:A11})
\begin{align*}
\sup_{\mu\in\M_1(\vp)} h_\mu(\vp) \leq \hKT(\vp),		
\end{align*}
holds for the entropies $h_\mu(\vp)$ and $\hKT(\vp)$ of $\vp$ in the sense of Definitions~\ref{d:3} and~\ref{d:7} (see Remark~\ref{r:2}).
\end{prop}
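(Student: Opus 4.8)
The strategy is the one behind Proposition~\ref{p:2}: extract a continuous selection of $\vp$, apply the classical variational principle to it, and transport the estimate through Lemma~\ref{l:3}. The only structural change is that the inequality assumed between $h_\mu(\vp)$ and $h_\mu(f)$ is now reversed, and that is precisely where the difficulty lies.

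Concretely, I would proceed as follows. Since $X$ is a compact subset of a Banach space and $\vp:X\to\KK(X)$ is l.s.c. with convex (compact, hence closed) values, Proposition~\ref{p:1} furnishes a single-valued continuous selection $f\subset\vp$. Next I record the inclusion $\M_1(f)\subseteq\M_1(\vp)$: if $\mu$ is $f$-invariant and $A\in\B$, then $f^{-1}(A)\subseteq\vp^{-1}_+(A)$ because $f(x)\in\vp(x)$, whence $\mu(\vp^{-1}_+(A))\geq\mu(f^{-1}(A))=\mu(A)$, i.e. $\mu\in\M_1(\vp)$. The classical variational principle for the single-valued continuous map $f$ on the compact metric space $X$ (see \cite{Go3,Mi,Wa}) gives $\sup_{\nu\in\M_1(f)}h_\nu(f)=h(f)$, and Lemma~\ref{l:3} together with Remark~\ref{r:2} gives $h(f)=\hKT(f)\leq\hKT(\vp)$. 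Feeding in the hypothesis $h_\mu(\vp)\leq h_\mu(f)$, which holds for every $f$-invariant $\mu$, we obtain for each such $\mu$ the chain
\[
h_\mu(\vp)\leq h_\mu(f)\leq\sup_{\nu\in\M_1(f)}h_\nu(f)=h(f)\leq\hKT(\vp).
\]

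The main obstacle is the final passage to the supremum on the left-hand side of \eqref{eq:A11}. The chain above controls $h_\mu(\vp)$ only for those $\mu\in\M_1(\vp)$ that are invariant under a continuous selection of $\vp$; yet the inclusion $\M_1(f)\subseteq\M_1(\vp)$ recorded above is in general strict, and there is no reason why a generic $\vp$-invariant measure should be invariant under any continuous selection. Hence the assertion should be read either with the supremum restricted to those $\vp$-invariant measures that are invariant under some continuous selection of $\vp$, or under an additional hypothesis --- in the spirit of the other restrictive assumptions collected in this note --- forcing $\M_1(\vp)=\bigcup\{\M_1(f):f\text{ a continuous selection of }\vp\}$. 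Under either reading, the displayed inequality, applied to the selection relevant to each $\mu$, closes the argument; without such a proviso the ``analogous argument'' transferred from Proposition~\ref{p:2} only bounds $\sup_{\mu\in\M_1(f)}h_\mu(\vp)$, not $\sup_{\mu\in\M_1(\vp)}h_\mu(\vp)$.
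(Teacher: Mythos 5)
Your argument is the paper's own proof step for step: Michael's theorem (Proposition~\ref{p:1}) produces a continuous selection $f$, Goodwyn's inequality gives $\sup_{\nu\in\M_1(f)}h_\nu(f)\leq h(f)$, Lemma~\ref{l:3} gives $h(f)\leq\hKT(\vp)$, and the hypothesis $h_\mu(\vp)\leq h_\mu(f)$ transports the bound to $h_\mu(\vp)$. The obstruction you single out in your last paragraph is genuine, and it is \emph{not} resolved in the paper: the printed proof simply asserts $\sup_{\mu\in\M_1(\vp)}h_\mu(\vp)\leq\sup_{\mu\in\M_1(f)}h_\mu(f)$, which would require every $\vp$-invariant measure to lie in $\M_1(f)$ (or at least in $\M_1(g)$ for some continuous selection $g$), whereas the only inclusion available is the opposite one, $\M_1(f)\subseteq\M_1(\vp)$, exactly as you record. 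This is also why the corresponding step in Proposition~\ref{p:2} is harmless --- there the supremum over the larger set $\M_1(\vp)$ automatically dominates the supremum over $\M_1(f)$ --- but becomes problematic once the assumed inequality is reversed. So your conclusion stands: as written, the argument only controls $h_\mu(\vp)$ for $\mu$ invariant under some continuous selection, and the stated inequality needs either the supremum restricted to such measures or an extra hypothesis forcing $\M_1(\vp)=\bigcup\{\M_1(f):f\subset\vp \text{ continuous}\}$. You have not missed any idea that the paper supplies; you have found a gap that the paper's proof passes over silently.
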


\begin{proof}
By the well known inequality for single-valued continuous maps (see e.g. \cite{Go1}), we have
\[
\sup_{\mu\in\M_1(f)} h_\mu(f) \leq h(f),
\]
for every single-valued continuous selection $f\subset \vp$ of $\vp$, whose existence if guaranteed by Proposition~\ref{p:1}.

Furthermore, in view of \eqref{eq:A2} in Lemma~\ref{l:3},
\[
h(f) \leq \hKT(\vp).
\]

Thus, by virtue of the hypothesis
\[
h_\mu(\vp) \leq h_\mu(f),
\]
we arrive finally at
\[
\sup_{\mu\in\M_1(\vp)} h_\mu(\vp) \leq \sup_{\mu\in\M_1(f)} h_\mu(f) \leq h(f) \leq \hKT(\vp),		
\]
which completes the proof.

\end{proof}

\begin{remark}\label{r:5}
Since the supposed inequalities in Propositions~\ref{p:2} and~\ref{p:3} are quite implicit and non-effective, we decided to fomulate the obtained results rather in the form of propositions than theorems as in \cite{VS}.
\end{remark}

\section{Full-variational principle}
In spite of the restrictions mentioned in Remark~\ref{r:5}, we will be finally able to establish a full variational principle for special multivalued l.s.c. maps. Hence, summing up the results of Theorem~\ref{t:2} and Proposition~\ref{p:2}, we can give immediately the last theorem.

\begin{thm}[Full variational principle]\label{t:4}
Let $\vp:X\to\KK(X)$ be a convex-valued l.s.c. multivalued map on a compact subset $X$ of a Banach space satisfying conditions \eqref{eq:A8}, for every $\mu\in\M_1(\vp)$ and all $A,B\in\B$. If $h_\mu(\vp)\geq h_\mu(f)$ is still satisfied for every single-valued continuous selection $f\subset\vp$ of $\vp$ and every $f$-invariant measure $\mu$, then the equality
\begin{align*}
\sup_{\mu\in\M_1(\vp)} h_\mu(\vp) = h_+(\vp)
\end{align*}
holds for the entropies $h_\mu(\vp)$ and $h_+(\vp)$ of $\vp$ in the sense of Definitions~\ref{d:3} and \ref{d:4}.
\end{thm}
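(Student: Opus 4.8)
The plan is simply to splice together the two one-sided estimates that have already been established, exactly as announced before the statement.

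For the inequality $\sup_{\mu\in\M_1(\vp)} h_\mu(\vp) \leq h_+(\vp)$, I would invoke Theorem~\ref{t:2}. Indeed, $\vp$ is l.s.c.\ and, by hypothesis, every $\mu\in\M_1(\vp)$ satisfies the conditions \eqref{eq:A8}; hence $h_\mu(\vp)\leq h_+(\vp)$ for each such $\mu$, and passing to the supremum over $\M_1(\vp)$ gives the claim. Note that only the lower semicontinuity of $\vp$ and \eqref{eq:A8} enter here; the Banach-space structure and the convexity of values play no role in this half.

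For the reverse inequality $\sup_{\mu\in\M_1(\vp)} h_\mu(\vp) \geq h_+(\vp)$, I would apply the second conclusion of Proposition~\ref{p:2}. Its hypotheses are met: $\vp$ is convex-valued and l.s.c.\ on a compact subset of a Banach space, so that Proposition~\ref{p:1} furnishes a single-valued continuous selection $f\subset\vp$, and we are assuming $h_\mu(\vp)\geq h_\mu(f)$ for every such selection and every $f$-invariant $\mu$. Proposition~\ref{p:2} then yields $\sup_{\mu\in\M_1(\vp)} h_\mu(\vp) \geq h_+(\vp)$ directly. Combining this with the previous paragraph gives the asserted equality.

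There is essentially no obstacle beyond verifying that the hypotheses of Theorem~\ref{t:4} contain precisely what is needed to run both halves simultaneously: the conditions \eqref{eq:A8} are exactly what Theorem~\ref{t:2} requires, while convex values in a compact subset of a Banach space are exactly what Proposition~\ref{p:2} requires (through Michael's selection theorem, Proposition~\ref{p:1}), and the comparison $h_\mu(\vp)\geq h_\mu(f)$ bridges the multivalued and single-valued metric entropies. All the genuine content has already been absorbed into Theorem~\ref{t:2} and Proposition~\ref{p:2}, so the present statement follows at once.
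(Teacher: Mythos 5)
Your proposal is correct and is exactly the paper's argument: the authors also obtain Theorem~\ref{t:4} by combining the upper bound from Theorem~\ref{t:2} (via conditions \eqref{eq:A8}) with the lower bound from the second conclusion of Proposition~\ref{p:2} (via Michael's selection theorem and the hypothesis $h_\mu(\vp)\geq h_\mu(f)$). No differences worth noting.
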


\begin{remark}\label{r:6}
In view of Remark~\ref{r:4}, the l.s.c. maps $\vp:X\to\KK(X)$ in Theorem~\ref{t:4} can be more generally replaced there by their union with single-valued continuous maps $f_i: X\to X$, $i=1,\ldots,n$, i.e. by $\vp\cup (\bigcup_{i=1}^{n}f_i)$.
\end{remark}

\begin{remark}\label{r:7}
Theorem~\ref{t:4} is nonempty, because it obviously holds for single-valued continuous functions. It is however a question how ``rich'' is the class of multivalued maps satisfying this variational principle. In any way, in view of Proposition~\ref{p:2} and Lemma~\ref{l:4}, it can be extended into the following relations
\begin{align*}
\hKT(\vp) \geq h_+(\vp) = \sup_{\mu\in\M_1(\vp)} h_\mu(\vp) \geq \hCM^{\sepp}(\vp) \geq \hCM^{\spa}(\vp).
\end{align*}

If $\vp$ is additionally continuous then, according to Corollary~\ref{c:2}, we have also
\begin{align*}
\sup_{\nu\in\M_1(\vp^*)} h_\nu(\vp^*) = h(\vp^*) \geq h_+(\vp) = \sup_{\mu\in\M_1(\vp)} h_\mu(\vp) \geq \hCM^{\sepp}(\vp) \geq \hCM^{\spa}(\vp).
\end{align*}
\end{remark}

Since the conditions \eqref{eq:A8} seem to be extremely restrictive, we will conclude by the trivial illustrative example, where they are satisfied for a u.s.c. mapping.

\begin{example}\label{e:4}
Consider the u.s.c. map $\vp:[0,1]\to\KK([0,1])$, where
\begin{align*}
	\vp(x) =
	\begin{cases}
		&\{0,1\}, \mbox{ for $x\in\{0,1\}$,}\\
		&x,\mbox{for $x\in(0,1)$.}
	\end{cases}
\end{align*}

One can easily check that the strong invariance condition $\mu(\vp^{-1}_+(A)) = \vp(A)$ is satisfied, jointly with $\mu(\vp^{-1}_+(A\cap B))=\mu(A\cap B)$, i.e. conditions \eqref{eq:A8}, for the standard Lebesgue measure $\mu$ on $[0,1]$ and all the Lebesgue measurable subsets $A,B\subset [0,1]$.

On the other hand, the mapping $\vp$ under consideration is here neither l.s.c., nor continuous, but u.s.c., which is not the case at Theorems~\ref{t:2} and~\ref{t:4} (cf. also Remark~\ref{r:6}). The same is true for another u.s.c. mapping, satisfying conditions \eqref{eq:A8}, which was constructed in \cite[Example 2.5]{VS}.
\end{example}

\begin{remark}\label{r:9}
In fact, it is also not the case at Theorem~\ref{t:1}, despite the fact that no regularity restrictions have been explicitly imposed there on $\vp:X\to\KK(X)$ (cf. Remark~\ref{r:4}). Even if the authors of Theorem~\ref{t:1} would have misunderstood by closed-valued maps the u.s.c. ones, they required in the proof of Theorem~\ref{t:1} an openness of the covers $\U_n$, $n\in\N$, where $\U=\{D_0\cup D_1,\ldots,D_0\cup D_k\}$ and
\[
\U_n = \left\{ \bigcap_{i=0}^{n-1} \vp^{-i}_+(U_i): U_i\in\U, i=0,\ldots,n-1\right\}.
\]
This is possible only when $\vp$ is l.s.c. (see Definition~\ref{d:1}). Therefore, their maps $\vp$ are in Theorem~\ref{t:1} de facto continuous.
\end{remark}

%\nocite{*}
%\begin{thebibliography}{27}
\bibliography{AL-Vivas-Sirvent}\bibliographystyle{siam}

\end{document}